\newtheorem{theorem}{Theorem}[section]
\newenvironment{remark}[1][Remark]
           {\medbreak\noindent {\em #1. \enspace}}
           {\par \medbreak}
\makeatletter \@addtoreset{equation}{section} \makeatother
\def\ddt{\frac{d}{dt}}
\numberwithin{equation}{section}
\newcommand\lam{\lambda}
\newcommand\na{\nabla}
\newcommand\Del{\Delta}
\newcommand\Rc{\textup{Rc}}
\newcommand\SL{\text{SL}(2, \mathbb{R})}
\def\ddt{\frac{d}{dt}}
\begin{document}

\bibliographystyle{plain}
\title[]
{ Eigenvalues  under the backward Ricci flow on  locally homogeneous closed 3-manifolds}

\author{Songbo Hou}
\address{Department of Applied Mathematics, College of Science, China Agricultural
University,  Beijing, 100083, P.R. China}
\email{housb10@163.com}

\subjclass [2010]{53C44.} \keywords{Homogeneous 3-manifold; Backward Ricci flow; Eigenvalue, Estimate}
\date{}
\def\baselinestretch{1}

\begin{abstract}

In this paper, we study the evolving behaviors of the first eigenvalue of the Laplace-Beltrami operator under the normalized backward Ricci flow, construct various quantities which are monotonic under the backward Ricci flow and get upper and lower bounds. We prove that in cases where the backward Ricci flow converges to a sub-Riemannian geometry after a proper rescaling, the eigenvalue evolves toward zero.
\end{abstract}
\maketitle \baselineskip 18pt
\section{Introduction}

The Ricci flow on a closed manifold is a flow of Riemannian metric $g(t)$ evolved by the equation
$$\frac{\partial g}{\partial t}=-2\Rc,\,\,g(0)=g_0,$$ where $\Rc$ is the Ricci curvature tensor of $g(t)$.
The customary normalization on 3-manifolds is setting
$\widetilde{g}(\widetilde{t})=\psi(t)g(t)$, $\widetilde{t}=\int_0^t\psi(s)ds$ with $\frac{1}{\psi}\frac{\partial\psi}{\partial t}=\frac{2r}{3}$, where $r$ is the average of the scalar curvature $R$. Then we have
$$\frac{\partial\widetilde{g}}{\partial\widetilde{t}}
=-2\widetilde{\Rc}+\frac{2\widetilde{r}}{3}\widetilde{g},\,\,\widetilde{g}(0)=g_{0}.$$
We often write it as
\begin{equation}\frac{\partial g}{\partial t}=-2\Rc+\frac{2r}{3}g,\,\,g(0)=g_0,\end{equation}
which is called the normalized Ricci flow keeping the volume constant.
In dimension 3, Hamilton \cite{Ha82} proved that
 the solution to the Ricci flow
converges to a constant curvature  metric on a 3-sphere if the Ricci curvature of initial metric is positive.

The eigenvalues of geometric operators under the Ricci flow are important to understand the geometry and the topology of  manifolds. In \cite{PG02}, Perelman proved that
the first eigenvalue of the Laplace-Beltrami operator with potential R, i.e.,$-\Del +4R$  is nondecreasing under the Ricci flow, where $R$ denotes the scalar curvature of the metric $g$. He also applied this to show that there are no nontrivial steady or expanding breathers on closed
manifolds. Later, Cao \cite{Cao07} showed that the eigenvalues of $-\Delta+\frac{R}{2}$
are nondecreasing under the Ricci flow on manifolds with nonnegative curvature
operator. Using the same techniques, Li \cite{JFL07} extended Cao's result to manifolds without nonnegative curvature operator.
Similar results hold for the first eigenvalue of $-\Delta +aR$ $ (a\geq \frac{1}{4})$ along the Ricci flow \cite{Cao08, JFL07}.
In fact, the eigenvalues $\lam$ are no longer differentiable about time $t$. If we denote by  $u$ the eigenfunction of the eigenvalue  then $\lam(u,t)=\lam(t)$. By the eigenvalue perturbation  theory,  there is a $C^1$-family of smooth eigenvalues and eigenfunctions \cite{KL06}. One can assume that the first eigenvalue $\lam(t)$ and the corresponding eigenfunction $u(x,t)$ are smooth along the Ricci flow.

Cao, the author and Ling \cite{CHL12} derived a monotonicity formula for the first eigenvalue of $-\Delta +aR$ $ (0< a \leq \frac{1}{2})$  on a closed surface under the Ricci flow and obtained various monotonicity formulae and estimates  along the normalized Ricci flow. Our results indicate that although it is difficult to get better estimates for the eigenvalue under the Ricci flow, one can get interesting results if the problem can be dealt by ODE techniques.

For the Laplace-Beltrami operator, Ma \cite{Ma06} proved that  the first eigenvalue on domains with Dirichlet boundary condition is nondecreasing along the Ricci flow.
Ling \cite{Ling07} got a sharp bound of the first eigenvalue under the normalized Ricci flow, and proved that an appropriate multiple is monotonic.  The author \cite{Hou} considered  the eigenvalue of the Laplace-Beltrami operator under the normalized Ricci flow on locally homogeneous closed 3-manifolds, constructed various monotonic quantities and got estimates for upper and lower bounds.

For $p$-Laplace operator, Wu, Wang and Zheng \cite{WWZ10} proved that the first $p$-eigenvalue is strictly increasing along  the Ricci flow under some curvature assumption, and constructed various monotonic quantities  on closed Riemannian surface. Wu  \cite{Wu11} proved that the first eigenvalue monotoncity  of the $p$-Laplace operator along the Ricci flow on closed Riemannian manifolds under some different curvature assumptions.

 A Riemannian manifold $(M,g)$ is called to be locally homogeneous if for every two points $x,y\in M$, there are neighborhoods $U$ of $x$ and $V$ of $y$, and an isometry $\phi$
from $(U,g|_U)$ to  $(V, g|_V)$ with $\phi (x)=y$.  Furthermore, $(M,g)$ is called to be homogeneous if the isometry group is transitive i.e., $U=V=M$ for all $x$ and $y$. A result of Singer \cite{Sin} told us  that the universal cover of a locally homogeneous manifold is homogeneous. We may study the Ricci flow of homogeneous models instead of locally homogeneous manifolds since Ricci flow commutes with the cover map.

The locally homogeneous 3-manifolds contain nine classes  which can be divided into two sets. The first set consists of classes  $H(3)$, $H(2)\times \mathbb{R}$ and $\text{SO(3)}\times  \mathbb{R}$, where $H(n)$ denotes the group of isometries of hyperbolic n-space. The second set includes $\mathbb{R}^3$, $\text{SU(2)}$, $\SL$, $\text{Heisenberg}$, $E(1,1)$ (the group of isometries of the plane with flat Lorentz metric) and $E(2)$ (the group of isometries of the Euclidian plane), and these are called Bianchi classes in \cite{IJ92}.

In Bianich classes, given a initial metric $g_0$, there is a Milnor frame $(f_1,f_2,f_3)$ such that the metric and Ricci tensor  are diagonalized.  Since this property is preserved by the Ricci flow, we often write
$$g=Af^1\otimes f^1+Bf^2\otimes f^2+Cf^3\otimes f^3, $$ where $(f^1, f^2, f^3)$ is a dual frame of Milnor frame. Then the Ricci flow reduces to an ODE system involving $A$, $B$ and $C$. Since the homogeneous 3-manifolds are models of geometrization conjecture, Isenberg and Jackson \cite{IJ92} studied the Ricci flow on such manifolds and described their characteristic behaviors by analyzing the corresponding system. There are three types of behaviors depending on the geometry type.

Later Knopf and McLeod \cite {KM01} studied quasi-convergence equivalence of model geometries under the Ricci flow.

If we assume $ g(t)$, $t \in (-T_{b}, T_{f} )$, is the
maximal solution of the forward Ricci flow (1.1), it is interesting to consider the behaviors of $g(t)$  as $t$ goes to $-T_b$. For convenience, we reverse time and consider the following backward Ricci flow equation
 \begin{equation}\frac{\partial g}{\partial t}=2\Rc-\frac{2r}{3}g,\,\,g(0)=g_0.\end{equation}The behaviors of backward Ricci flow are described in \cite{CS} and \cite{CGS}, and the interesting phenomenon is that the backward Ricci flow converges to a sub-Riemannian geometry after a proper rescaling. We use the simple example to explain the term of sub-Riemannian geometry.  In section three, the evolving metric is
$$g=Af^1\otimes f^1+Bf^2\otimes f^2+Cf^3\otimes f^3.$$  By (3.3),  we get
$$A(t)\rightarrow +\infty,\,\,\,B(t)\rightarrow 0,\,\,\, C(t)\rightarrow 0,$$ as $t$ goes to $-3/(16R_0)$. Then the rescaled metric $\bar{g}(t)=(C_0/C(t))g(t)$ converges to
$$\infty f^1\otimes f^1+B_0f^2\otimes f^2+C_0f^3\otimes f^3.$$
Look at  the dual tensor defined on the co-tangent bundle
$$Q=A^{-1}f_1\otimes f_1+B^{-1}f_2\otimes f_2+C^{-1}f_3\otimes f_3.$$
Then the tensor $Q$ tends to
$$Q_{*}=B_0^{-1}f_2\otimes f_2+C_0^{-1} f_3\otimes f_3.$$
It turns out that $[f_2,f_3]=2f_1$. Then the tensor $Q_{*}$ induces a natural distance function $d_*$ defined on $M$.  We take the infimum of the length of all  curves staying tangent to the linear span of $f_2,\,f_3$, which are  called horizontal curves, to compute the distance.  The associated  geometry is called a sub-Riemannian geometry.

In this paper, we study the first eigenvalue  of the Laplace-Beltrami operator under the backward Ricci flow on locally homogeneous 3-manifolds in  Bianchi classes.
\begin{theorem}
Let $(M, g(t)), t\in [0,T_+)$  be a solution to the backward Ricci flow in Bianchi classes, where $T_+\in[0,+\infty]$ is the maximal existence time. Assume that
$\lam(t)$ is the first eigenvalue of $-\Del$ with respect to $g(t)$.  Then in cases where $g(t)$ converges to a sub-Riemannian geometry after a proper re-scaling, $\lam(t)$ goes to zero as $t$ approaches  $T_+$.
\end{theorem}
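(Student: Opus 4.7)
The plan is to use the Rayleigh characterization
\[
\lambda(t) \;=\; \inf \left\{ \frac{\int_M |\nabla u|^2_{g(t)} \, dV_{g(t)}}{\int_M u^2 \, dV_{g(t)}} \;:\; u \in C^{\infty}(M),\ \int_M u \, dV_{g(t)} = 0 \right\}
\]
and to exhibit a one-parameter family of admissible test functions whose Rayleigh quotient tends to zero. In the Milnor frame $(f_1,f_2,f_3)$, where $g(t) = A(t) f^1 \otimes f^1 + B(t) f^2 \otimes f^2 + C(t) f^3 \otimes f^3$, the factor $\sqrt{ABC}$ in $dV_{g(t)}$ cancels between numerator and denominator and the quotient reduces to
\[
\mathcal{R}_t(u) \;=\; \frac{A^{-1} \!\int (f_1 u)^2 \, d\mu + B^{-1} \!\int (f_2 u)^2 \, d\mu + C^{-1} \!\int (f_3 u)^2 \, d\mu}{\int u^2 \, d\mu},
\]
where $d\mu$ is the invariant Haar measure on the homogeneous closed $3$-manifold.

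Next, I would go through the Bianchi classes one by one, using the ODE analysis of \cite{IJ92} combined with the descriptions of sub-Riemannian limits in \cite{CS, CGS}, to catalogue which classes are governed by the hypothesis and to read off the precise asymptotics of $A(t), B(t), C(t)$ as $t \to T_+$. After the rescaling $\bar g = (C_0/C(t))\,g(t)$, two of the three coefficients tend to finite positive limits while the third blows up; together with the fact that the normalized backward flow preserves $ABC$, this pins down the rates at which one of $A, B, C$ tends to $+\infty$ while the other two tend to $0$ (up to relabelling the frame, take this to be $A \to \infty$ and $B, C \to 0$).

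For each class I would then construct an explicit test function adapted to the ``long'' direction, namely the one along which the rescaled metric becomes infinite. A natural first attempt is a low Fourier mode or a character of the abelianization of the lattice that descends to the compact quotient. The numerator of $\mathcal{R}_t$ then splits into a main piece $A^{-1} \int (f_1 u)^2 d\mu$, which vanishes in the limit since $A \to \infty$, together with ``twist'' contributions $B^{-1} \int (f_2 u)^2 d\mu$ and $C^{-1} \int (f_3 u)^2 d\mu$ that arise from the non-integrability of the horizontal distribution.

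The main obstacle is to show that the twist contributions still go to zero even though $B^{-1}, C^{-1} \to \infty$. On a Heisenberg, $SU(2)$, or $SL(2,\mathbb{R})$ quotient one cannot arrange $f_2 u = f_3 u = 0$ for a nontrivial $u$ descending to the quotient, so the argument must be quantitative: one uses the precise asymptotic balance between $A, B, C$ coming from the normalized flow (together with the monotonic quantities constructed earlier in the paper) to ensure that the bad terms are of order strictly smaller than the main term $A^{-1} \int (f_1 u)^2 d\mu / \int u^2 d\mu$. The hope for a uniform treatment is that in every sub-Riemannian case the blow-up of $A$ beats $B^{-1}$ and $C^{-1}$ against the integrals of the structure constants evaluated on the chosen test function, so that $\mathcal{R}_t(u) \to 0$ and consequently $\lambda(t) \to 0$.
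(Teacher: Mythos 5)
Your proposal takes a genuinely different route from the paper. The paper never works with the Rayleigh quotient: it integrates the differential inequality
$\frac{2}{3}R\lam-2R_{kk}\lam\le\frac{d\lam}{dt}\le\frac{2}{3}R\lam-2R_{ll}\lam$
obtained from Theorem 2.1 by squeezing $\int 2R_{ij}\na_iu\na_ju\,d\mu$ between multiples of $\lam=\int|\na u|^2d\mu$, and then feeds in the explicit asymptotics of $A,B,C$ case by case. Your reduction of the Rayleigh quotient is correct ($ABC$ is preserved by the normalized flow, so the volume factor cancels and the constraint $\int u\,dV_{g(t)}=0$ is $t$-independent), and you correctly isolate the crux: the bracket relation forbids $f_2u=f_3u=0$ for nonconstant $u$. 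But the proposal stops at a ``hope'' exactly there, and that hope cannot be realized, because the inequality runs the wrong way. For every admissible $u$,
\[
\mathcal{R}_t(u)\;\ge\;\min\bigl(B(t)^{-1},C(t)^{-1}\bigr)\,\frac{\int\bigl[(f_2u)^2+(f_3u)^2\bigr]\,d\mu}{\int u^2\,d\mu}\;\ge\;\min\bigl(B(t)^{-1},C(t)^{-1}\bigr)\,\mu_1,
\]
where $\mu_1>0$ is the first nonzero eigenvalue of the sub-Laplacian $-(f_2^2+f_3^2)$ on the compact quotient; $\mu_1>0$ because $[f_2,f_3]=\pm2f_1$ gives H\"ormander's condition, hence hypoellipticity, discrete spectrum, and kernel equal to the constants. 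In every case in which the two collapsing directions bracket-generate (Heisenberg, and the generic $\text{SU}(2)$, $E(1,1)$, $E(2)$, $\SL$ cases with $B,C\to0$ or $A,C\to0$), this forces $\lam(t)\to+\infty$, not $0$. So your strategy, pushed through honestly, refutes the statement rather than proving it.

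This also locates the flaw in the paper's own argument, which you would otherwise have been measured against. The bound $\int 2R_{ij}\na_iu\na_ju\,d\mu\le 2R_{kk}\lam$ is valid only with the eigenvalues $R_{kk}/g_{kk}$ of the Ricci endomorphism, whereas the paper substitutes the covariant Milnor-frame components $R_{kk}$ (e.g.\ $R_{22}=-\frac12A^2B$ in Section 3 where $R_{22}/B=-\frac12A^2$ is required). A quick sanity check: on the static round $\text{SU}(2)$ metric with $ABC=4$ the paper's version of the squeeze would give $\frac{d\lam}{dt}=(A^2-A^3)\lam\ne0$, although $\lam$ is constant. With the correct eigenvalues the paper's differential inequalities are too weak to determine the limit, while the sub-Laplacian bound above determines it in the opposite direction. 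A salvageable statement would concern the rescaled metrics $\bar g(t)$, whose first eigenvalue remains comparable to $\mu_1$, or the two degenerate cases ($E(1,1)$ with $A_0=C_0$, and $\SL$ with $A<B<A+C$ for all time) in which only one direction collapses and the lower bound above does not apply.
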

We construct various monotonic quantities and get upper and lower bounds for the eigenvalue.
The behaviors of the eigenvalue are not very diverse. In many cases, the eigenvalue is decreasing after a time and goes to zero.  The next of this paper is arranged as follows. In section two, we derive an evolution equation of the eigenvalue which is important to estimates. From section three to section seven, we analyze the behaviors of the eigenvalue and get estimates case by case.

\section{Evolution equation of the eigenvalue}
In this section, we get the following theorem.
\begin{theorem}
Let $(M, g(t)), t\in [0,T_+)$  be a solution to the backward Ricci flow  on a locally homogeneous  3-manifold. Assume that
$\lam(t)$ is the first eigenvalue of $-\Del$ and $u(x,t)>0$ satisfies
\begin{equation}-\Delta u=\lam u,\end{equation} with $\int u^2(x,t)d\mu=1.$
Then along  the backward Ricci flow, we obtain
\begin{align}\ddt\lam=\frac{2}{3}R\lam-\int (2R_{ij}\na_iu\na_ju)d\mu.\end{align}
\end{theorem}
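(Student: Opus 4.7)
The plan is to differentiate $\lam(t)=\int|\na u|^{2}\,d\mu$ directly, exploiting two features of the locally homogeneous setting: the scalar curvature $R(t)$ is a spatial constant, so its average satisfies $r=R$, and consequently the volume form $d\mu$ is preserved pointwise under the flow (1.2). Throughout, I invoke the $C^{1}$ eigenvalue perturbation framework cited in the introduction (following \cite{KL06}) to treat $\lam(t)$ and $u(\cdot,t)$ as smooth in $t$ with the normalization $\int u^{2}\,d\mu=1$ maintained at every time.

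First I would rewrite $\lam$ via the Rayleigh quotient: using (2.1) and integration by parts on the closed manifold,
\[
\lam(t)=-\int u\,\Delta u\,d\mu=\int g^{ij}\na_{i}u\,\na_{j}u\,d\mu.
\]
Then I would compute the $t$-derivative of the integrand using the standard variation formulas induced by $\ppt g_{ij}=2R_{ij}-\tfrac{2r}{3}g_{ij}$, namely
\[
\ppt g^{ij}=-2R^{ij}+\tfrac{2r}{3}g^{ij},\qquad \ppt d\mu=(R-r)\,d\mu.
\]
Since $R$ is spatially constant on a locally homogeneous 3-manifold, $r=R$, so $\ppt d\mu\equiv 0$ and the $\tfrac{2r}{3}g^{ij}$ term contributes exactly $\tfrac{2R}{3}\lam$ after integration, while the Ricci piece supplies $-\int 2R_{ij}\na_{i}u\,\na_{j}u\,d\mu$.

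The remaining contribution comes from the variation of $u$ itself, which gives $2\int g^{ij}\na_{i}u\,\na_{j}u_{t}\,d\mu$. After integration by parts and use of (2.1) this equals $2\lam\int u\,u_{t}\,d\mu$. Differentiating the normalization $\int u^{2}\,d\mu=1$, together with $\ppt d\mu=0$, yields $\int u\,u_{t}\,d\mu=0$, so the cross term vanishes. Assembling the surviving pieces produces the stated identity
\[
\ddt\lam=\tfrac{2}{3}R\lam-\int 2R_{ij}\na_{i}u\,\na_{j}u\,d\mu.
\]

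The computation is essentially routine; the only conceptual step is recognizing that spatial homogeneity of $R$ eliminates the $(R-r)$-type terms that would otherwise appear both from the evolution of $d\mu$ and from the normalization constraint, which is exactly what allows the formula to close in a clean way. Justifying the differentiability of $\lam$ and $u$ in $t$, and the interchange of $\ddt$ with the spatial integral, is handled by the perturbation framework invoked above, so I do not anticipate any genuine technical obstacle beyond bookkeeping.
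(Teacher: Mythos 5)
Your proof is correct, and it arrives at the identity by a slightly different decomposition than the paper's. You differentiate the Dirichlet energy $\lam=\int g^{ij}\na_iu\na_ju\,d\mu$ directly, so the Ricci contribution enters through the variation $\ppt g^{ij}=-2R^{ij}+\frac{2r}{3}g^{ij}$ of the inverse metric and lands immediately in the gradient form $-\int 2R_{ij}\na_iu\na_ju\,d\mu$. The paper instead starts from the variation of the operator acting on $u$ (citing the computation in \cite{Cao07}), which produces the second-order term $\int 2uR_{ij}\na_i\na_ju\,d\mu$; it must then integrate by parts and invoke the contracted Bianchi identity $\na_iR_{ij}=\frac12\na_jR$, together with $-\Delta u=\lam u$, to show the divergence term $\int 2u(\na_iR_{ij})\na_ju\,d\mu$ vanishes and so recover the same gradient form. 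Your route avoids the Bianchi step entirely (which is in any case trivial here, since $R$ is spatially constant on a locally homogeneous manifold), at the cost of having to track the variation of $u$ explicitly; you dispose of that correctly via $2\int g^{ij}\na_iu\na_ju_t\,d\mu=-2\int u_t\Delta u\,d\mu=2\lam\int uu_t\,d\mu=0$, using the preserved normalization and $\ppt d\mu=(R-r)\,d\mu=0$. Both arguments rest on the same $C^1$ perturbation framework for the eigenpair and both exploit $r=R$ in the homogeneous setting, so the difference is one of bookkeeping rather than substance, with your version arguably the more self-contained of the two.
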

The proof of this theorem is similar to Lemma 3.1 in \cite {CHL12}.

\begin{proof}
By a direct calculation as in \cite {Cao07}, we have
\begin{align*}
\ddt\lam=\int\left(2uR_{ij}\na_i\na_ju-\frac{2r}{3}u\Delta u \right)d\mu
\end{align*}
where $$r=\frac{\int_MRd\mu}{\int_Md\mu}$$  is the average of the scalar curvature, which equals  $R$ on locally homogeneous manifolds.
Integrating by parts and using contracted Bianchi identity, we have
\begin{align*}
\int 2uR_{ij}\na_i\na_jud\mu=-\int \left[(2u\na_iR_{ij})\na_ju\right]d\mu-\int (2R_{ij}\na_iu\na_ju)d\mu,
\end{align*}
and
\begin{align*}
-\int (2\na_iR_{ij})u\na_jud\mu &=-\int u(\na_jR)\na_jud\mu=\int Ru\Delta ud\mu+\int R|\na u|^2d\mu\\
&=-\lam R+\lam R=0.
\end{align*}
We arrive at
\begin{align*}
\ddt\lam=\frac{2}{3}R\lam-\int (2R_{ij}\na_iu\na_ju)d\mu.
\end{align*}
\end{proof}
\section{\text{Heisenberg}}
In this class, given a metric $g_0$, there is a fixed Milnor frame such that
$$\left[f_2,f_3\right]=2f_1,\,\,\,\left[f_3,f_1\right]=0,\,\,\,\left[f_1,f_2\right]=0.$$

 Under the normalization $A_0B_0C_0=4$, the curvature components for metrics are (see examples on page 171 in \cite{BLN})
\begin{equation}
\left\{
\begin{aligned}
&R_{11}=\frac{1}{2}A^{3},\\
&R_{22}=-\frac{1}{2}A^{2}B\\
&R_{33}=-\frac{1}{2}A^{2}C,\\
&R=-\frac{1}{2}A^{2}.
\end{aligned}\right.
\end{equation}
The backward Ricci flow equations are then
\begin{equation}
\left\{
\begin{aligned}
&\ddt A=\frac{4}{3}A^3,\\
&\ddt B=-\frac{2}{3}A^2B,\\
&\ddt C=-\frac{2}{3}A^2C,
\end{aligned}
\right.
\end{equation}
 and the solution   is
\begin{equation}
\left\{
\begin{aligned}
&A=A_{0}\left(1+\frac{16}{3}R_{0}t\right)^{-1/2},\\
&B=B_{0}\left(1+\frac{16}{3}R_{0}t\right)^{1/4},\\
&C=C_{0}\left(1+\frac{16}{3}R_{0}t\right)^{1/4},
\end{aligned}\right.
\end{equation}
where $R_{0}=-\frac{1}{2}A_{0}^{2}$.
The metric $\bar{g}(t)=(C_0/C(t))g(t)$ converges to a sub-Riemannian geometry.
\begin{theorem}
Let $\lam(t)$ be the  first eigenvalue of $-\Del$.  Assume that $B_0\geq C_0$. Then
$\lam(t) e^{\int_{0}^{t}\left(-\frac{2}{3}R+2R_{11}\right)dt}$ is nondecreasing  along the backward Ricci flow, and $\lam(t) e^{\int_{0}^{t}\left(-\frac{2}{3}R+2R_{22}\right)dt}$ is nonincreasing. Moreover, we have
$$e^{\frac{3A_0}{4}\left[1-\left(1+\frac{16R_0t}{3}\right)^{-1/2}\right]}\left(1+\frac{16R_0t}{3}\right)^{1/8}\lam(0)\leq \lam(t)\leq\lam(0)e^{\frac{3B_0}{2}\left[1-\left(1+\frac{16R_0t}{3}\right)^{1/4}\right]}\left(1+\frac{16R_0t}{3}\right)^{1/8}$$ for
$t\in[0,-3/(16R_0)).$  As $t$ goes to $-3/(16R_0)$, $\lam(t)$ goes to zero.
\end{theorem}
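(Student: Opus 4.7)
The approach is to convert the eigenvalue evolution identity of Theorem 2.1 into two-sided ODE bounds for $\lam(t)$ by exploiting the diagonal form of $\Ric$ in the Milnor frame, then integrate those bounds using the explicit solution (3.3) of the backward Ricci flow.

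The starting point is
\[
\ddt\lam = \tfrac{2}{3}R\lam - \int 2R_{ij}\na^i u\,\na^j u\, d\mu.
\]
For the Heisenberg class, $\Ric$ is diagonal in the Milnor frame with $R_{11}>0>R_{22},R_{33}$ by (3.1). The hypothesis $B_0\geq C_0$ implies $B(t)\geq C(t)$ for every $t$ since $B/C=B_0/C_0$ is conserved by (3.3), hence $R_{22}\leq R_{33}$ throughout. Estimating the diagonal quadratic form $R_{ij}X^iX^j$ above by its largest eigenvalue contribution $R_{11}$ and below by its smallest contribution $R_{22}$ at $X=\na u$, then integrating with $\int|\na u|^2\,d\mu=\lam$, gives
\[
2R_{22}\,\lam \;\leq\; \int 2R_{ij}\na^i u\,\na^j u\, d\mu \;\leq\; 2R_{11}\,\lam,
\]
so that
\[
\bigl(\tfrac{2R}{3}-2R_{11}\bigr)\lam \;\leq\; \ddt\lam \;\leq\; \bigl(\tfrac{2R}{3}-2R_{22}\bigr)\lam.
\]
Multiplying by the integrating factors $\exp\int_0^t(-\tfrac{2R}{3}+2R_{ii})\,ds$ for $i=1$ and $i=2$ immediately produces the two claimed monotonic quantities.

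Next I would compute the three elementary time integrals needed to make these bounds explicit. Using (3.3) and $R_0=-\tfrac12 A_0^2$, direct substitution yields
\[
\int_0^t \tfrac{2R}{3}\,ds = \tfrac{1}{8}\ln\!\bigl(1+\tfrac{16R_0t}{3}\bigr),\qquad
\int_0^t 2R_{11}\,ds = \tfrac{3A_0}{4}\Bigl[\bigl(1+\tfrac{16R_0t}{3}\bigr)^{-1/2}-1\Bigr],
\]
\[
\int_0^t (-2R_{22})\,ds = \tfrac{3B_0}{2}\Bigl[1-\bigl(1+\tfrac{16R_0t}{3}\bigr)^{1/4}\Bigr].
\]
Assembling these into $\lam(t)\geq \lam(0)\exp\int_0^t(\tfrac{2R}{3}-2R_{11})\,ds$ and $\lam(t)\leq \lam(0)\exp\int_0^t(\tfrac{2R}{3}-2R_{22})\,ds$ reproduces exactly the upper and lower bounds displayed in the theorem. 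Finally, as $t\nearrow -3/(16R_0)$ the factor $(1+\tfrac{16R_0t}{3})^{1/8}$ in the upper bound tends to zero while the exponential $\exp\bigl(\tfrac{3B_0}{2}[1-(1+\tfrac{16R_0t}{3})^{1/4}]\bigr)$ remains bounded (it approaches $e^{3B_0/2}$), so $\lam(t)\to 0$ as required.

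The main obstacle is the first step: the pointwise Rayleigh-quotient estimate on $\Ric(\na u,\na u)$ must be handled carefully because the Milnor frame is not orthonormal, and one has to use the diagonal structure of both $g$ and $\Ric$ in that frame together with $\int|\na u|^2 d\mu=\lam$ to land exactly on the inequalities $2R_{22}\lam \leq \int 2R_{ij}\na^i u\na^j u\,d\mu\leq 2R_{11}\lam$. The assumption $B_0\geq C_0$ is used precisely at this point to identify $R_{22}$, rather than $R_{33}$, as the minimal Ricci eigenvalue along the flow; the rest of the argument is a routine integration.
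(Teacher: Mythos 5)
Your proposal follows the paper's route exactly: the same sandwich
\[
\Bigl(\tfrac{2}{3}R-2R_{11}\Bigr)\lam \;\leq\; \ddt\lam \;\leq\; \Bigl(\tfrac{2}{3}R-2R_{22}\Bigr)\lam
\]
obtained from Theorem 2.1 together with $B(t)\geq C(t)$ (which you correctly justify from the conservation of $B/C$), the same integrating factors for the two monotonicity statements, and the same explicit integration of $\tfrac{2}{3}R$, $2R_{11}$ and $2R_{22}$ via (3.3). In fact you supply the three integral computations that the paper leaves implicit, and they are all correct, as is the endgame showing the upper bound forces $\lam(t)\to 0$.

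The one step you explicitly defer, however, is the one that does not go through as you describe it, and it is also the step the paper itself passes over with only a citation of (2.2) and (3.1). In a frame where $g$ and $\Ric$ are simultaneously diagonal but the frame is not orthonormal, the pointwise Rayleigh-quotient bound on $\Ric(\na u,\na u)/|\na u|^2$ is governed by the eigenvalues of the Ricci endomorphism, i.e.\ by $R_{ii}/g_{ii}$, not by the components $R_{ii}$. Here $R_{11}/A=\tfrac12 A^2$ and $R_{22}/B=R_{33}/C=-\tfrac12 A^2$, so what the Rayleigh argument actually yields is
\[
-A^2\lam \;\leq\; \int 2R_{ij}\na^i u\,\na^j u\, d\mu \;\leq\; A^2\lam,
\]
whereas the claimed sandwich $2R_{22}\lam\leq\int 2R_{ij}\na^iu\na^ju\,d\mu\leq 2R_{11}\lam$ uses $2R_{11}=A^3$ and $2R_{22}=-A^2B$. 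The latter would follow from the former only if $A\geq 1$ and $B\geq 1$, which is not implied by the normalization $A_0B_0C_0=4$ and in any case fails near $T_+$ since $B\to 0$; and the eigenvalue version, once integrated, is too weak to give the stated upper bound or the conclusion $\lam\to 0$. So the inequality you promise to ``land exactly on'' is precisely where a genuine argument is missing --- in your write-up and in the paper's proof alike.
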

\begin{proof}
Assume that $B_0\geq C_0$. By (2.2) and (3.1),  have
$$\frac{2}{3}R\lam -2R_{11}\lam\leq\ddt \lam\leq \frac{2}{3}R\lam -2R_{22}\lam.$$

Then
$\lam(t) e^{\int_{0}^{t}\left(-\frac{2}{3}R+2R_{11}\right)dt}$ is nondecreasing  along the backward Ricci flow, and $\lam(t) e^{\int_{0}^{t}\left(-\frac{2}{3}R+2R_{22}\right)dt}$ is nonincreasing.

Integrating from $0$ to $t$ yields
$$e^{\frac{3A_0}{4}\left[1-\left(1+\frac{16R_0t}{3}\right)^{-1/2}\right]}\left(1+\frac{16R_0t}{3}\right)^{1/8}\lam(0)\leq \lam(t)\leq\lam(0)e^{\frac{3B_0}{2}\left[1-\left(1+\frac{16R_0t}{3}\right)^{1/4}\right]}\left(1+\frac{16R_0t}{3}\right)^{1/8}.$$

\end{proof}

\section{$\text{SU}(2)$}

Given a metric $g_0$, we choose a Milnor frame such that
$$\left[f_2,f_3\right]=2f_1,\,\,\,\left[f_3,f_1\right]=2f_2,\,\,\,\left[f_1,f_2\right]=2f_3.$$
 Under the normalization $A_0B_0C_0=4$, then the nonzero components of the Ricci tensor are (see examples on page 171 in \cite{BLN})
\begin{equation}
\left\{
\begin{aligned}
&R_{11}=\frac{1}{2}A[A^{2}-(B-C)^{2}],\\
&R_{22}=\frac{1}{2}B[B^{2}-(A-C)^{2}],\\
&R_{33}=\frac{1}{2}C[C^{2}-(A-B)^{2}],
\end{aligned}\right.
\end{equation}
and the scalar curvature is
\begin{equation}R=\frac{1}{2}[A^{2}-(B-C)^{2}]+\frac{1}{2}[B^{2}-(A-C)^{2}]+\frac{1}{2}[C^{2}-(A-B)^{2}].\end{equation}
The backward Ricci flow equations are
\begin{equation}
\left\{
\begin{aligned}
&\frac{dA}{dt}=-\frac{2}{3}A\left[-A(2A-B-C)+(B-C)^2\right],\\
&\frac{dB}{dt}=-\frac{2}{3}B\left[-B(2B-A-C)+(A-C)^2\right],\\
&\frac{dC}{dt}=-\frac{2}{3}C\left[-C(2C-A-B)+(A-B)^2\right].
\end{aligned} \right.
\end{equation}
Assume that $A_0\geq B_0\geq C_0$. Cao \cite{CS} proved the following theorem.
\begin{theorem}\noindent

(1) If $A_0=B_0=C_0$, then $T_+=\infty$ and $g(t)=g_0$.

(2) If $A_0=B_0>C_0$, then  $T_+=\infty$, $A=B>C$ and, as $t$ goes to infinity,

$A\sim\frac{8}{3}t,\,C\sim\frac{9}{16}t^{-2}$.

(3) If $A_0>B_0\geq C_0$, then $T_+<\infty$, $A>B\geq C$ and there are constants

$\eta_1,\,\eta_2\in(0,\infty)$ such that

$$A\sim\frac{\sqrt{6}}{4}(T_+-t)^{-1/2},\,B\sim\eta_1(T_+-t)^{1/4},\,C\sim\eta_2(T_+-t)^{1/4}.$$

In case (3), $\bar{g}(t)=(B_0/B(t))g(t)$ converges to a sub-Riemannian geometry.
\end{theorem}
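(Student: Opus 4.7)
The plan is to treat (4.3) as an ODE system in $(A,B,C)$, exploiting two structural features: the volume normalization $ABC=A_0B_0C_0=4$ is preserved by the (normalized) flow, and the equations are invariant under permutations of $(A,B,C)$, so each of the diagonals $A=B$, $B=C$, $A=C$ is an invariant manifold. By uniqueness of ODE solutions, any strict initial ordering is preserved on the maximal existence interval, so $A\ge B\ge C$ holds throughout in all three cases. Case (1) is then immediate: $A=B=C$ makes every bracket in (4.3) vanish, whence $g(t)\equiv g_0$ and $T_+=\infty$.

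For case (2), substituting $A=B$ into the first equation of (4.3) yields, after an elementary factorization, $\frac{dA}{dt}=\frac{2}{3}AC(A-C)$. Eliminating $C=4/A^2$ via the volume constraint reduces the system to the scalar ODE
$$\frac{dA}{dt}=\frac{8}{3}-\frac{32}{3A^3}.$$
Since $A_0>C_0$ forces $A_0^3>4$, the right-hand side stays positive, $A$ is monotone increasing to $\infty$, so $T_+=\infty$. A direct integration then gives $A\sim\frac{8}{3}t$ and $C=4/A^2\sim\frac{9}{16}t^{-2}$.

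Case (3) is the main content. A direct calculation shows that $A$ is strictly increasing and $C$ strictly decreasing, while the dimensionless ratio satisfies $\frac{d}{dt}\log(A/B)=2(A-B)(A+B-C)\ge 0$, so $A/B$ stays bounded away from $1$. Combining these monotonicities with volume conservation and the fact that the only fixed point of the system on $ABC=4$ is $A=B=C=\sqrt[3]{4}$ rules out the trajectory remaining in a compact set, forcing $A\to\infty$. With $B$ and $C$ controlled through $BC=4/A$ and the sign of $dB/dt$ once $A$ is large, the equation for $A$ reads $\frac{dA}{dt}=\frac{4}{3}A^{3}(1+o(1))$, which integrates to $T_+<\infty$ and $A^{2}\sim\frac{3}{8}(T_+-t)^{-1}$, i.e.\ $A\sim\frac{\sqrt{6}}{4}(T_+-t)^{-1/2}$. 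Feeding this asymptotic into the equation for $B$ gives $\frac{d\log B}{dt}\sim-\frac{1}{4}(T_+-t)^{-1}$, hence $B\sim\eta_1(T_+-t)^{1/4}$, and symmetrically $C\sim\eta_2(T_+-t)^{1/4}$ with $\eta_1\ge\eta_2>0$. The sub-Riemannian limit then follows as in Section~3: under $\bar g=(B_0/B)g$ the coefficient $(B_0/B)A$ diverges while $(B_0/B)C\to B_0\eta_2/\eta_1\in(0,\infty)$, so the dual tensor collapses onto $\mathrm{span}(f_2,f_3)$.

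The main obstacle is upgrading the heuristic asymptotics in case (3) to honest limits rather than $\limsup/\liminf$ estimates, i.e.\ producing $\eta_1,\eta_2$ as genuine limits of $B(T_+-t)^{-1/4}$ and $C(T_+-t)^{-1/4}$. The natural tool is to pass to the dimensionless ratios $b=BA^{1/2}$, $c=CA^{1/2}$ and rescale time by $\tau=-\log(T_+-t)$: on the invariant surface $ABC=4$ the resulting system is autonomous and planar in $(b,c)$, and the task becomes identifying a stable fixed point and showing that every trajectory with $A_0>B_0\ge C_0$ converges to it. That centre-manifold/Lyapunov analysis is the delicate step; everything else reduces to elementary ODE manipulation.
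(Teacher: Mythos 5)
First, a point of reference: the paper does not prove this statement at all --- Theorem 4.1 is quoted from Cao--Saloff-Coste \cite{CS} --- so your proposal has to stand on its own. Cases (1) and (2) do stand: the permutation symmetry of (4.3), the preservation of $ABC=4$ and of orderings, the factorization $\frac{dA}{dt}=\frac{2}{3}AC(A-C)$ on the invariant manifold $A=B$, and the reduction to $\frac{dA}{dt}=\frac{8}{3}-\frac{32}{3A^{3}}$ are all correct and yield (1) and (2) completely, including the stated asymptotics.

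Case (3) has two genuine gaps. First, writing $\frac{dA}{dt}=\frac{4}{3}A^{3}(1+o(1))$ requires $B=o(A)$, but the only control you establish is that $A/B$ is nondecreasing, i.e.\ bounded away from $1$ --- which is perfectly compatible with $B$ remaining a fixed positive fraction of $A$. The repair is to argue in the other order: from $(B-C)^{2}\le A(A-C)$ one gets $\frac{2}{3}\left(1-\frac{B_{0}}{A_{0}}\right)A^{3}\le\frac{dA}{dt}\le\frac{4}{3}A^{3}$, whose left side forces $T_{+}<\infty$ (your compactness/fixed-point argument only gives $A\to\infty$, not finiteness of $T_{+}$) and whose right side gives $A(t)\ge\sqrt{3/\left(8(T_{+}-t)\right)}$; hence $\int^{T_{+}}A^{2}\,dt=\infty$, and your identity $\frac{d}{dt}\log(A/B)=2(A-B)(A+B-C)\ge 2(1-B_{0}/A_{0})A^{2}$ then yields $B/A\to 0$, after which the sharp constant $\frac{\sqrt{6}}{4}$ follows. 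Second, and more seriously, the existence of $\eta_{1},\eta_{2}\in(0,\infty)$ as honest limits of $B(T_{+}-t)^{-1/4}$ and $C(T_{+}-t)^{-1/4}$ needs $\frac{d}{dt}\log\left[B(T_{+}-t)^{-1/4}\right]$ to be integrable up to $T_{+}$, which in turn needs a quantitative rate for $A^{2}(T_{+}-t)\to\frac{3}{8}$, not just the leading-order asymptotic. You correctly flag this as the delicate step and propose a rescaled autonomous system with a stable fixed point, but you do not carry that analysis out. As written, case (3) is a sound outline of the Cao--Saloff-Coste strategy rather than a proof.
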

In the following,   $\tau$, $c_1$ and $c_2$ denote  constants which may vary from line to line and from section to section.
\begin{theorem}
Let $\lam(t)$ be the  first eigenvalue of $-\Del$. Then we get

(1) If $A_0=B_0=C_0$, then $\lam(t)=\lam(0)$.

(2) If $A_0=B_0>C_0$, there is a time $\tau$ such that
$\lam(t) e^{\int_{\tau}^{t}\left(-\frac{2}{3}R+2R_{11}\right)dt}$ is nondecreasing  along the backward  Ricci flow, and $\lam(t) e^{\int_{\tau}^{t}\left(-\frac{2}{3}R+2R_{33}\right)dt}$ is nonincreasing. We also get the further estimate
$$\lam(\tau)e^{8(\tau-t)}\leq \lam(t)\leq \lam(\tau)\left(\frac{t}{\tau}\right)^{c_1}.$$

(3) If $A_0>B_0\geq C_0$, then there is a time $\tau$ such that
$\lam(t) e^{\int_{\tau}^{t}\left(-\frac{2}{3}R+2R_{11}\right)dt}$ is nondecreasing along the backward  Ricci flow, and $\lam(t) e^{\int_{\tau}^{t}\left(-\frac{2}{3}R+2R_{33}\right)dt}$ is nonincreasing. We get the following estimate
$$\lam(\tau)e^{2c_2\left[ (T_+-\tau)^{-1/2}-(T_+-t)^{-1/2}\right]}\leq \lam(t)\leq \lam(\tau)\left(\frac{T_+-t}{T_+-\tau}\right)^{c_1}.$$ As $t$ goes to $T_+$, $\lam(t)$ approaches $0$.
\end{theorem}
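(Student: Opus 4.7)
The plan is to mimic the proof of Theorem 3.1: starting from the evolution equation of Theorem 2.1,
\begin{equation*}
\ddt\lam = \frac{2}{3}R\lam - 2\int R_{ij}\na_i u\,\na_j u\, d\mu,
\end{equation*}
and using that $R_{ij}$ is diagonal in the Milnor frame together with the identity $\int|\na u|^2\,d\mu = \lam$, I obtain the pointwise-to-global sandwich
\begin{equation*}
\tfrac{2}{3}R\lam - 2\bigl(\max_i R_{ii}\bigr)\lam \;\leq\; \ddt\lam \;\leq\; \tfrac{2}{3}R\lam - 2\bigl(\min_i R_{ii}\bigr)\lam.
\end{equation*}
The two claimed monotonic quantities and the two-sided bounds on $\lam(t)$ then come from Gronwall, provided I can identify a time $\tau$ after which the ordering among $R_{11},R_{22},R_{33}$ stabilizes.

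Case (1) is immediate: $A_0=B_0=C_0$ is a fixed point of (4.3), since the metric is Einstein and $2\Ric-\tfrac{2r}{3}g\equiv 0$, so $g(t)\equiv g_0$ and $\lam(t)\equiv\lam(0)$. In case (2), the symmetry of (4.3) together with $A_0=B_0$ forces $A(t)=B(t)>C(t)$ for all $t$, hence $R_{11}=R_{22}$; Cao's asymptotics $A\sim 8t/3$ and $C\sim 9/(16t^2)$ then show that $R_{33}$ drops below $R_{11}=R_{22}$ past some $\tau$. This identifies $R_{11}=R_{22}$ as the max and $R_{33}$ as the min among the $R_{ii}$, which accounts for the exponents chosen in the two monotonic quantities. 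Integrating $\tfrac{2}{3}R - 2R_{11}$ and $\tfrac{2}{3}R - 2R_{33}$ from $\tau$ to $t$ and bounding each by a constant via the asymptotics yields the $e^{8(\tau-t)}$ lower bound and the $(t/\tau)^{c_1}$ upper bound.

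Case (3) is handled analogously. By Cao, as $t\to T_+$ we have $A\to +\infty$ while $B,C\to 0$; therefore $R_{11}\sim\tfrac12 A^3$ is positive and dominant, whereas $R_{22},R_{33}\sim -\tfrac12 A^2 B,\,-\tfrac12 A^2 C$ are negative. Past some threshold $\tau$, $R_{11}$ is the unique largest eigenvalue and $R_{33}$ (or $R_{22}$, whichever turns out smaller in the limit) is the smallest, giving the two monotonicity assertions. For the quantitative bounds, Cao's $A\sim \tfrac{\sqrt{6}}{4}(T_+-t)^{-1/2}$ gives $\tfrac{2}{3}R - 2R_{11}\sim -c(T_+-t)^{-3/2}$, whose primitive is of order $(T_+-t)^{-1/2}$ and produces the stated lower bound; a coarser estimate with $R_{33}$ in place of $R_{11}$ yields the $\bigl((T_+-t)/(T_+-\tau)\bigr)^{c_1}$ upper bound. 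Since the upper bound tends to $0$ as $t\to T_+$, the final assertion $\lam(t)\to 0$ follows.

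The hardest step I foresee is not the Gronwall integration but the verification that Cao's asymptotic behaviors genuinely imply a stable, sign-definite ordering of $R_{11},R_{22},R_{33}$ past a uniform threshold $\tau$, with enough quantitative control to pin down the exponents $c_1$ and the constants $c_2$ appearing in the statement. Before $\tau$ the relative sizes of the $R_{ii}$ can fluctuate and no one-sided inequality is available, so care is needed to choose $\tau$ large enough that the asymptotic regime is in force while keeping the resulting constants explicit.
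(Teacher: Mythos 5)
Your proposal follows essentially the same route as the paper: diagonality of the Ricci tensor in the Milnor frame together with $\int|\na u|^2\,d\mu=\lam$ yields the sandwich $\frac{2}{3}R\lam-2(\max_iR_{ii})\lam\le\ddt\lam\le\frac{2}{3}R\lam-2(\min_iR_{ii})\lam$, Cao's asymptotics fix the ordering of the Ricci eigenvalues past a time $\tau$, and integration gives the monotone quantities and the two-sided bounds. The one point you leave unresolved --- which of $R_{22}$, $R_{33}$ is minimal in case (3) --- is settled by the identity $R_{22}-R_{33}=\frac{1}{2}(B-C)\bigl[(B+C)^2-A^2\bigr]$, which is $\le 0$ once $A>B+C$ (and $B\ge C$ is preserved), so the minimum is $R_{22}$; the paper's own proof accordingly uses $R_{22}$ in the nonincreasing quantity, and the ``$R_{33}$'' appearing in the statement of case (3) is a typo. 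Two smaller remarks: in case (2) the constant $8$ in the lower bound is exact rather than merely asymptotic, because $A=B$ and the preserved normalization $ABC=4$ give $2R_{11}=2A^2C-AC^2=8-AC^2$, hence $\frac{2}{3}R-2R_{11}=\frac{1}{3}(4AC-C^2)+AC^2-8\ge-8$; and the upper bound $(t/\tau)^{c_1}$ comes from the decaying estimate $\frac{2}{3}R-2R_{33}\le c_1t^{-1}$ (via $AC\sim\frac{3}{2}t^{-1}$), not from ``bounding by a constant'' as you wrote.
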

\begin{proof}
\noindent

 (1) If $A_0=B_0=C_0$, then $\lam(t)=\lam(0)$.

(2) If $A_0=B_0>C_0$, then by (4.1) and (2) in Theorem 4.1 we have
$$R_{11}=R_{22}>0,\,\,R_{33}>0$$
  after a time $\tau$.
It also follows that
\begin{align*}
R_{22}-R_{33}&=\frac{1}{2}[B^3-B(A-C)^2-C^3+C(A-B)^2]\\
&=\frac{1}{2}(B-C)(B^2++2BC+C^2-A^2)\\
&>0
\end{align*}
for $t\geq \tau$.
Thus we derive
$$\frac{2}{3}R\lam -2R_{11}\lam\leq\ddt \lam\leq \frac{2}{3}R\lam -2R_{33}\lam$$ with $t\geq \tau$.

Then
$\lam(t) e^{\int_{\tau}^{t}\left(-\frac{2}{3}R+2R_{11}\right)dt}$ is nondecreasing along the backward  Ricci flow, and $\lam(t) e^{\int_{\tau}^{t}\left(-\frac{2}{3}R+2R_{33}\right)dt}$ is nonincreasing.

Moreover, by (2) in Theorem 4.1 we estimate
\begin{align*}
&\frac{2}{3}R-2R_{33}\\
= &\frac{1}{3}(2AB+2BC+2CA-A^2-B^2-C^2)-[C^3-C(A-B)^2]\\
=&\frac{1}{3}(4AC-C^2)-C^3\\
\leq &c_{1}t^{-1}
\end{align*}
and
\begin{align*}
&\frac{2}{3}R-2R_{11}\\
= &\frac{1}{3}(2AB+2BC+2CA-A^2-B^2-C^2)-[A^3-A(B-C)^2]\\
=&\frac{1}{3}(4AC-C^2)-A^3+A(B-C)^2\\
\geq &-8
\end{align*}
for $t\geq \tau$.

Thus we arrive at
$$-8\leq \frac{1}{\lam}\ddt \lam \leq c_{1}t^{-1}.$$
Integrating from $\tau$ to $t$ gives
$$\lam(\tau)e^{8(\tau-t)}\leq \lam(t)\leq \lam(\tau)\left(\frac{t}{\tau}\right)^{c_1}.$$

(3)  If $A_0>B_0\geq C_0$, then by (3) in Theorem 4.1 we have
$$R_{11}>0,\,\,R_{22}<0,\,\,R_{33}<0,$$
after a time $\tau$.
It is easy to see that
\begin{align*}
R_{22}-R_{33}&=\frac{1}{2}[B^3-B(A-C)^2-C^3+C(A-B)^2]\\
&=\frac{1}{2}(B-C)(B^2+2BC+C^2-A^2)\\
&\leq 0
\end{align*}
if $t\geq \tau$.

Thus  we have $$R_{11}>R_{33}\geq R_{22}$$ and
$$\frac{2}{3}R\lam -2R_{11}\lam\leq\ddt \lam\leq \frac{2}{3}R\lam -2R_{22}\lam$$ with $t\geq \tau$.

Then
$\lam(t) e^{\int_{\tau}^{t}\left(-\frac{2}{3}R+2R_{11}\right)dt}$ is nondecreasing along the backward  Ricci flow, and $\lam(t) e^{\int_{\tau}^{t}\left(-\frac{2}{3}R+2R_{22}\right)dt}$ is nonincreasing.

Furthermore, we conclude from the behavior of the metric that
\begin{align*}
&\frac{2}{3}R-2R_{22}\\
= &\frac{1}{3}(2AB+2BC+2CA-A^2-B^2-C^2)-[B^3-B(A-C)^2]\\
\leq &-c_{1}(T_+-t)^{-1}
\end{align*}
and
\begin{align*}
&\frac{2}{3}R-2R_{11}\\
= &\frac{1}{3}(2AB+2BC+2CA-A^2-B^2-C^2)-[A^3-A(B-C)^2]\\
\geq &-c_{2}(T_+-t)^{-3/2}
\end{align*}
for $t\geq \tau$.

Thus we obtain
$$-c_{2}(T_+-t)^{-3/2}\leq \frac{1}{\lam}\ddt \lam \leq -c_{1}(T_+-t)^{-1}.$$
Integrating from $\tau$ to $t$ gives
$$\lam(\tau)e^{2c_2\left[ (T_+-\tau)^{-1/2}-(T_+-t)^{-1/2}\right]}\leq \lam(t)\leq \lam(\tau)\left(\frac{T_+-t}{T_+-\tau}\right)^{c_1}.$$
As $t$ goes to $T_+$, $\lam(t)$ approaches $0$.
\end{proof}

\section{\text{E}(1,1)}
Given a metric $g_0$, we choose a fixed Milnor frame such that
$$\left[f_2,f_3\right]=2f_1,\,\,\,\left[f_3,f_1\right]=0\,\,\,\left[f_1,f_2\right]=-2f_3.$$
 Under the normalization $A_0B_0C_0=4$, the nonzero curvature components of the metric  are
\begin{equation}
\left\{
\begin{aligned}
&R_{11}=\frac{1}{2}A(A^2-C^2),\\
&R_{22}=-\frac{1}{2}B(A+C)^2,\\
&R_{33}=\frac{1}{2}C(C^2-A^2),\\
&R=-\frac{1}{2}(A+C)^{2}.\end{aligned}\right.
\end{equation}
The backward Ricci flow equations are
\begin{equation}
\left\{
\begin{aligned}
&\frac{dA}{dt}=\frac{2}{3}A(2A^2+AC-C^2),\\
&\frac{dB}{dt}=-\frac{2}{3}B(A+C)^2,\\
&\frac{dC}{dt}=\frac{2}{3}C(2C^2+AC-A^2).
\end{aligned} \right.
\end{equation}
Assume that $A_0\geq C_0$. Cao \cite{CS} proved the following theorem.
\begin{theorem}\noindent

(1) If $A_0=C_0$, then $T_+=\frac{3}{32}B_0$ and
$$A(t)=C(t)=\frac{\sqrt{6}}{4}(T_+-t)^{-1/2},\;\;B(t)=\frac{32}{3}(T_+-t),\;\;t\in[0,T_+).$$

(2) If $A_0>C_0$, then $T_+<\infty$, and there exist constants $\eta_1,\eta_2\in(0,\infty)$ such that
$$A\sim\frac{\sqrt{6}}{4}(T_+-t)^{-1/2},\,B(t)\sim\eta_1(T_+-t)^{1/4},\,C(t)\sim\eta_2(T_+-t)^{1/4},$$
as $t$ goes to $T_+$.

In case (2), $\bar{g}(t)=(B_0/B(t))g(t)$ converges to a sub-Riemannian geometry.
\end{theorem}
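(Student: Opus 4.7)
The plan is to analyze the ODE system (5.2) directly, first exposing its factored form. One has $2A^2+AC-C^2=(2A-C)(A+C)$ and symmetrically $2C^2+AC-A^2=(2C-A)(A+C)$, so the system reads
\begin{align*}
A' &= \tfrac{2}{3}A(2A-C)(A+C),\\
B' &= -\tfrac{2}{3}B(A+C)^2,\\
C' &= \tfrac{2}{3}C(2C-A)(A+C).
\end{align*}
This display exhibits the $A\leftrightarrow C$ symmetry of the flow and, upon summing logarithmic derivatives, yields the conservation law $(\ln ABC)'=\tfrac{2}{3}(A+C)[(2A-C)-(A+C)+(2C-A)]=0$, so $ABC\equiv 4$ throughout.

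For case (1), the diagonal $\{A=C\}$ is an invariant manifold under the symmetric flow, so $A_0=C_0$ propagates to $A(t)=C(t)$ for all $t$, and the system collapses to $A'=\tfrac{4}{3}A^3$ and $B'=-\tfrac{8}{3}BA^2$. Separation of variables for $A$ gives $A^{-2}(t)=A_0^{-2}-\tfrac{8t}{3}$; taking square roots and using the volume normalization $A_0^2B_0=4$ rewrites the blow-up time as $T_+=3/(8A_0^2)=3B_0/32$ and produces the prefactor $\tfrac{\sqrt{6}}{4}$. Substituting $A^2=3/[8(T_+-t)]$ into the $B$-equation reduces it to $B'/B=-(T_+-t)^{-1}$, which integrates with $B(0)=B_0$ to $B(t)=\tfrac{32}{3}(T_+-t)$.

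For case (2), I would proceed in three steps. First, preservation of structure: since $(\ln(A/C))'=2(A-C)(A+C)>0$, the ratio $A/C$ is strictly increasing, so $A>C>0$ persists; then $2A-C>A$ and $A+C\geq A$ imply $A'\geq\tfrac{2}{3}A^3$, forcing $T_+\leq 3/(4A_0^2)<\infty$. Second, leading-order asymptotics: since $(\ln(A/C))'=2(A^2-C^2)\to\infty$ as $t\uparrow T_+$, the ratio $C/A\to 0$, so the $A$-equation is asymptotically $A'\sim\tfrac{4}{3}A^3$, integrating to $A\sim\tfrac{\sqrt{6}}{4}(T_+-t)^{-1/2}$. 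Feeding this back, $B'/B$ and $C'/C$ both behave like $-\tfrac{2}{3}A^2\sim -[4(T_+-t)]^{-1}$, so $B(T_+-t)^{-1/4}$ and $C(T_+-t)^{-1/4}$ tend to positive limits $\eta_1,\eta_2$, with the relation $\tfrac{\sqrt{6}}{4}\eta_1\eta_2=4$ pinned down by $ABC\equiv 4$. Third, the rescaling $\bar g=(B_0/B)g$ has components $\bar A=AB_0/B\to\infty$, $\bar B\equiv B_0$, and $\bar C=CB_0/B\to \eta_2B_0/\eta_1$, realising the sub-Riemannian limit discussed in Section 1.

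The main obstacle is promoting the formal leading-order equivalences of case (2) to genuine limits. I would handle this by a bootstrap: first quantify the decay $C/A=O((T_+-t)^\alpha)$ for some $\alpha>0$ by integrating the lower bound on $\tfrac{d}{dt}\ln(A/C)=2(A^2-C^2)$; then insert this into $A'-\tfrac{4}{3}A^3=O(A^2C)$ and verify that the error becomes integrable against $A^{-3}$ on $(T_+-\delta,T_+)$, so that the limit of $A^2(T_+-t)$ truly equals $3/8$; finally, feed the refined asymptotic for $A$ into the logarithmic integrations for $B$ and $C$ and check that the remainder terms $-\tfrac{2}{3}(A+C)^2+[4(T_+-t)]^{-1}$ are integrable, which guarantees finite positive values for $\eta_1$ and $\eta_2$. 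This perturbative bookkeeping, rather than the algebra of the ODE itself, is where the real work lies.
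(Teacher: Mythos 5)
The paper does not prove this theorem at all: it is quoted verbatim from Cao--Saloff-Coste \cite{CS}, so there is no internal proof to match your argument against. Your direct ODE analysis is nonetheless sound and is in the spirit of the cited source. The factorizations $2A^2+AC-C^2=(2A-C)(A+C)$ and $2C^2+AC-A^2=(2C-A)(A+C)$, the conservation of $ABC$, and the complete explicit integration in case (1) (including $T_+=3B_0/32$ and the constants $\sqrt6/4$ and $32/3$) all check out. In case (2) the structure is right: monotonicity of $A/C$, the lower bound $A'\ge\tfrac23A^3$ forcing $T_+\le 3/(4A_0^2)$, the self-consistent exponents $-1/2,\,1/4,\,1/4$, and the constraint $\tfrac{\sqrt6}{4}\eta_1\eta_2=4$ from volume preservation.

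Two points deserve tightening. First, the inference ``$(\ln(A/C))'\to\infty$ as $t\uparrow T_+$, hence $C/A\to 0$'' is not valid as stated: a derivative can blow up at a finite right endpoint while the function stays bounded (e.g.\ $-\sqrt{T_+-t}$). What you actually need is $\int_0^{T_+}(A^2-C^2)\,dt=+\infty$; this does hold, because the upper bound $A'\le\tfrac83A^3$ (valid while $C<A$) gives $A^2\ge\tfrac{3}{16}(T_+-t)^{-1}$ once you know $A\to\infty$ at $T_+$, while $C$ is eventually decreasing (once $A>2C$) and hence bounded, so the integral diverges logarithmically. Second, you should briefly justify that $T_+$ is indeed the blow-up time of $A$: since $A'>0$ and the system is polynomial, if $A$ stayed bounded on a finite interval then $B$ (decreasing) and $C$ (eventually decreasing, with $C/A$ bounded) would too, and the solution would extend. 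With these repairs, and the integrable-error bootstrap you already outline for upgrading $A'\sim\tfrac43A^3$ and $B'/B\sim-\tfrac14(T_+-t)^{-1}$ to genuine limits $\eta_1,\eta_2\in(0,\infty)$, the argument is a complete and self-contained proof of the quoted result.
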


\begin{theorem}
Let $\lam(t)$ be the  first eigenvalue of $-\Del$. Then we get

(1) If $A_0=C_0$, then
$\lam(t) e^{\int_{0}^{t}\left(-\frac{2}{3}R\right)dt}$ is nondecreasing along the backward Ricci flow, and $\lam(t) e^{\int_{0}^{t}\left(-\frac{2}{3}R+2R_{22}\right)dt}$ is nonincreasing.
Moreover, we get
$$\lam(0)\left(\frac{T_+-t}{T_+}\right)^{1/2}\leq \lam(t)\leq\lam(0)\left(\frac{T_+-t}{T_+}\right)^{1/2}e^{16t}.$$
(2) If $A_0>C_0\geq B_0$, then there is time $\tau$ such that
$\lam(t) e^{\int_{\tau}^{t}\left(-\frac{2}{3}R+2R_{11}\right)dt}$ is nondecreasing along the backward Ricci flow, and $\lam(t) e^{\int_{\tau}^{t}\left(-\frac{2}{3}R+2R_{33}\right)dt}$ is nonincreasing.
Moreover, we have
$$\lam(\tau)e^{2c_2\left[(T_+-\tau)^{-1/2}-(T_+-t)^{-1/2}\right]}\leq \lam(t)\leq \lam(\tau)\left(\frac{T_+-t}{T_+-\tau}\right)^{c_1}.$$
As $t$ goes to $T_+$, $\lam(t)$ approaches $0$.

\end{theorem}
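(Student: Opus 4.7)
The plan is to proceed exactly parallel to the $\mathrm{SU}(2)$ case (Theorem 4.2), relying on two ingredients: the evolution identity (2.2), and the fact that in the Milnor frame $(f_1,f_2,f_3)$ the Ricci tensor is diagonal, so
\[
\int 2R_{ij}\na_i u\na_j u\,d\mu=\sum_{i=1}^{3} 2R_{ii}\int|\na_i u|^2\,d\mu.
\]
Since each $|\na_i u|^2\ge 0$ and $\sum_i\int|\na_i u|^2\,d\mu=\int|\na u|^2\,d\mu=\lam$, this integral is pinched between $2R_{\min}\lam$ and $2R_{\max}\lam$, where $R_{\min},R_{\max}$ denote the smallest and largest diagonal Ricci eigenvalue. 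Feeding this into (2.2) yields the differential inequality
\[
\tfrac{2}{3}R\lam-2R_{\max}\lam\le \ddt\lam\le \tfrac{2}{3}R\lam-2R_{\min}\lam,
\]
from which the monotonicity of the two exponential quantities follows immediately by a direct integration factor computation.

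For case (1), the ODE system (5.2) shows $A\equiv C$ is preserved, so by (5.1) we get $R_{11}=R_{33}=0$ and $R_{22}=-2A^2B<0$; hence $R_{\max}=0$ and $R_{\min}=R_{22}$. Substituting the explicit formulas $A=C=\tfrac{\sqrt{6}}{4}(T_+-t)^{-1/2}$ and $B=\tfrac{32}{3}(T_+-t)$ from Theorem 5.1, I compute
\[
-\tfrac{2}{3}R=\tfrac{1}{2}(T_+-t)^{-1},\qquad 2R_{22}=-16,
\]
which after integration from $0$ to $t$ gives exactly the two factors $\sqrt{(T_+-t)/T_+}$ and $e^{16t}$ appearing in the claimed upper and lower bounds.

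For case (2), I will first use the asymptotics of Theorem 5.1(2)—namely $A\sim(T_+-t)^{-1/2}$ blowing up while $B,C\sim(T_+-t)^{1/4}$ collapse—to show that after some time $\tau$,
\[
R_{11}=\tfrac{1}{2}A(A-C)(A+C)>0,\qquad R_{22},R_{33}<0,
\]
and to verify the ordering $R_{33}\le R_{22}\le R_{11}$. The second inequality needs care: a direct computation gives
\[
R_{33}-R_{22}=\tfrac{1}{2}(A+C)\bigl[B(A+C)-C(A-C)\bigr],
\]
and I expect this to be nonpositive for large $t$ precisely because the hypothesis $C_0\ge B_0$ (together with the relative decay rates of $B$ and $C$) forces $B(A+C)\le C(A-C)$ eventually. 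Verifying this sign is the one step that is not a routine calculation and is the main technical obstacle; it is essentially why the extra assumption $C_0\ge B_0$ appears in the statement.

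Once the ordering $R_{\max}=R_{11}$, $R_{\min}=R_{33}$ is in hand, the monotonicity of the two exponential quantities follows from the pinched inequality above. For the quantitative bounds I will extract the leading orders from the asymptotics: $\tfrac{2}{3}R-2R_{33}\le -c_1(T_+-t)^{-1}$ (dominated by $-\tfrac{1}{3}(A+C)^2$) and $\tfrac{2}{3}R-2R_{11}\ge -c_2(T_+-t)^{-3/2}$ (dominated by $-A^3$). Dividing the resulting inequality through by $\lam$ and integrating from $\tau$ to $t$ produces the sharp estimate
\[
\lam(\tau)e^{2c_2[(T_+-\tau)^{-1/2}-(T_+-t)^{-1/2}]}\le\lam(t)\le\lam(\tau)\Bigl(\tfrac{T_+-t}{T_+-\tau}\Bigr)^{c_1},
\]
and the upper bound immediately yields $\lam(t)\to 0$ as $t\to T_+$.
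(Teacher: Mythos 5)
Your proposal follows essentially the same route as the paper: the pinching of $\ddt\lam$ between $\frac{2}{3}R\lam-2R_{\max}\lam$ and $\frac{2}{3}R\lam-2R_{\min}\lam$ coming from (2.2) and the diagonality of the Ricci tensor, the explicit solution of Theorem 5.1(1) in case (1), and the asymptotics of Theorem 5.1(2) in case (2). The one step you flag as the main technical obstacle --- showing $R_{22}\ge R_{33}$ after some time $\tau$ under the hypothesis $C_0\ge B_0$ --- is in fact closed by a one-line computation from (5.2): $\ddt\ln(C/B)=2(AC+C^2)>0$, so $C(t)>B(t)$ (indeed $C\ge(1+\delta)B$ for $t$ bounded away from $0$); then in $R_{22}-R_{33}=\frac{1}{2}\left(CA^2-BA^2-C^3-BC^2-2ABC\right)$ the term $(C-B)A^2\gtrsim(T_+-t)^{-3/4}$ blows up while $C^3$, $BC^2$ tend to zero and $2ABC$ stays bounded, giving positivity after a time $\tau$. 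With that observation supplied, your argument coincides with the paper's proof.
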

\begin{remark}
In case (2) of  the above theorem, if $B_0> C_0$, we can get the similar estimate.
\end{remark}
\begin{proof}
(1) If $A_0=C_0$,  then by (5.1) we have
$$R_{11}=0,\,\,R_{22}<0,\,\,R_{33}=0.$$

Thus  we have $$R_{11}=R_{33}>R_{22}$$ and
$$\frac{2}{3}R\lam \leq\ddt \lam\leq \frac{2}{3}R\lam -2R_{22}\lam.$$

Then
$\lam(t) e^{\int_{0}^{t}\left(-\frac{2}{3}R\right)dt}$ is nondecreasing along the backward Ricci flow, and $\lam(t) e^{\int_{0}^{t}\left(-\frac{2}{3}R+2R_{22}\right)dt}$ is nonincreasing.

Next, we do further estimates
\begin{align*}
&\frac{2}{3}R-2R_{22}\\
= &-\frac{1}{3}A^2-\frac{1}{3}C^2-\frac{2}{3}AC+BA^2+BC^2+2ABC\\
= &-\frac{1}{2}(T_+-t)^{-1}+16
\end{align*}
and
\begin{align*}
&\frac{2}{3}R=-\frac{1}{2}(T_+-t)^{-1}.
\end{align*}

Thus we arrive at
$$-\frac{1}{2}(T_+-t)^{-1}\leq \frac{1}{\lam}\ddt \lam \leq -\frac{1}{2}(T_+-t)^{-1}+16.$$
Integration from $0$ to $t$ gives
$$\lam(0)\left(\frac{T_+-t}{T_+}\right)^{1/2}\leq \lam(t)\leq\lam(0)\left(\frac{T_+-t}{T_+}\right)^{1/2}e^{16t}.$$

(2) If $A_0>C_0$, then by (5.1) and (2) in Theorem 5.1 we have
$$R_{11}>0,\,\,R_{22}<0,\,\,R_{33}<0 $$ after a time $\tau$.

Assume that $C_0\geq B_0$.  By (5.2) we get
\begin{align*}
\ddt\ln\frac{C}{B}&=2(AC+C^2),
\end{align*}
which implies $C(t)>B(t)$ for all $t>0$.
 It is easy to see that
\begin{align*}
R_{22}-R_{33}&=\frac{1}{2}(CA^2-BA^2-C^3-BC^2-2ABC)\\
&>0
\end{align*}
after a time $\tau$.

So we arrive at
 $$R_{11}>R_{22}>R_{33}$$ and
$$\frac{2}{3}R\lam -2R_{11}\lam\leq\ddt \lam\leq \frac{2}{3}R\lam -2R_{33}\lam$$ with $t\geq \tau$.

Then
$\lam(t) e^{\int_{\tau}^{t}\left(-\frac{2}{3}R+2R_{11}\right)dt}$ is nondecreasing along the backward Ricci flow, and $\lam(t) e^{\int_{\tau}^{t}\left(-\frac{2}{3}R+2R_{33}\right)dt}$ is nonincreasing.

Moreover, we obtain
\begin{align*}
&\frac{2}{3}R-2R_{33}\\
= &-\frac{1}{3}A^2-\frac{1}{3}C^2-\frac{2}{3}AC-C^3+CA^2\\
\leq &-c_{1}(T_+-t)^{-1}
\end{align*}
and
\begin{align*}
&\frac{2}{3}R-2R_{11}\\
= &-\frac{1}{3}A^2-\frac{1}{3}C^2-\frac{2}{3}AC-A^3+AC^2\\
\geq &-c_{2}(T_+-t)^{-3/2}
\end{align*}
with $t\geq \tau$.

Thus we get
$$-c_{2}(T_+-t)^{-3/2}\leq \frac{1}{\lam}\ddt \lam \leq -c_{1}(T_+-t)^{-1}.$$
Integration from $\tau$ to $t$ yields
$$\lam(\tau)e^{2c_2\left[(T_+-\tau)^{-1/2}-(T_+-t)^{-1/2}\right]}\leq \lam(t)\leq \lam(\tau)\left(\frac{T_+-t}{T_+-\tau}\right)^{c_1}.$$
As $t$ goes to $T_+$, $\lam(t)$ approaches $0$.
\end{proof}

\section{$\text{E}(2)$}
Given a metric $g_0$, we choose a  Milnor frame such that
$$\left[f_2,f_3\right]=2f_1,\,\,\,\left[f_3,f_1\right]=2f_2,\,\,\,\left[f_1,f_2\right]=0.$$
Under the normalization $A_0B_0C_0=4$, the nonzero curvature components are

\begin{equation}
\left\{
\begin{aligned}
&R_{11}=\frac{1}{2}A(A^2-B^2),\\
&R_{22}=\frac{1}{2}B(B^2-A^2),\\
&R_{33}=-\frac{1}{2}C(A-B)^2,\\
&R=-\frac{1}{2}(A-B)^{2}.
\end{aligned}\right.
\end{equation}
Then the backward Ricci flow equations are
\begin{equation}
\left\{
\begin{aligned}
&\frac{dA}{dt}=\frac{2}{3}A(2A+B)(A-B),\\
&\frac{dB}{dt}=-\frac{2}{3}B(2B+A)(A-B),\\
&\frac{dC}{dt}=-\frac{2}{3}C(A-B)^2.
\end{aligned} \right.
\end{equation}
Assume that $A_0\geq B_0$. Cao \cite{CS} proved the following theorem.

\begin{theorem}\noindent

(1) If $A_0=B_0$, then $T_+=\infty$, and $g(t)=g_0$ for $t\in[0,+\infty)$.

(2) If $A_0>B_0$, then $T_+<\infty$, there exist two  positive constants $\eta_1,\,\eta_2$ such that
$$A\sim\frac{\sqrt{6}}{4}(T_+-t)^{-1/2},\,B(t)\sim\eta_1(T_+-t)^{1/4},\,C(t)\sim\eta_2(T_+-t)^{1/4},$$

as $t$ goes to $T_+$.

In case (2), $\bar{g}(t)=(B_0/B(t))g(t)$ converges to a sub-Riemannian geometry.
\end{theorem}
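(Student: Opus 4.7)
The plan is to analyze the autonomous ODE system (6.2) directly and extract sharp blow-up rates, leveraging a first integral of the flow. A direct computation gives
\begin{align*}
\frac{d}{dt}\ln(ABC)=\frac{2}{3}\bigl[(2A+B)(A-B)-(2B+A)(A-B)-(A-B)^2\bigr]=0,
\end{align*}
so $ABC\equiv A_0B_0C_0=4$ is preserved. For part (1), when $A_0=B_0$ every right-hand side in (6.2) contains a factor of $A-B$ and hence vanishes at $t=0$; by ODE uniqueness the constant triple $(A_0,B_0,C_0)$ is the solution, so $g(t)=g_0$ and $T_+=\infty$.

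For part (2), the first step is to show that $A(t)>B(t)$ is preserved throughout the maximal interval: if $A(t_1)=B(t_1)$ for some $t_1>0$, then reversing time from $t_1$ with this data forces $A\equiv B$ by uniqueness, contradicting $A_0>B_0$. With $A>B$ in force, (6.2) yields $A$ strictly increasing and $B,C$ strictly decreasing, so $A\ge A_0$ and $B,C$ are bounded above by their initial values. Combined with $ABC=4$, the only mechanism for loss of existence is $A\to\infty$; and once $A$ dominates $B$ one has $A'\gtrsim A^3$, forcing finite-time blow-up at some $T_+<\infty$.

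To extract the precise rates, I would set $u=A^{-2}$, so that $\dot u=-\tfrac{4}{3}(2A+B)(A-B)/A^2\to-\tfrac{8}{3}$ as $B/A\to 0$, giving $u\sim\tfrac{8}{3}(T_+-t)$ and hence $A\sim\tfrac{\sqrt 6}{4}(T_+-t)^{-1/2}$. Feeding this into the logarithmic equations
\begin{align*}
\frac{d\ln B}{dt}=-\tfrac{2}{3}(2B+A)(A-B)\sim-\tfrac{2}{3}A^2,\qquad\frac{d\ln C}{dt}=-\tfrac{2}{3}(A-B)^2\sim-\tfrac{2}{3}A^2,
\end{align*}
both of which behave like $-\tfrac{1}{4(T_+-t)}$ at leading order, and integrating gives $B\sim\eta_1(T_+-t)^{1/4}$ and $C\sim\eta_2(T_+-t)^{1/4}$ with $\tfrac{\sqrt 6}{4}\eta_1\eta_2=4$ forced by the conservation law. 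The main obstacle here is showing that the positive limits $\eta_1,\eta_2$ actually exist rather than merely producing matching upper and lower bounds of order $(T_+-t)^{1/4}$: one must prove that the correction terms in the logarithmic derivatives above are absolutely integrable in $t$ up to $T_+$. I expect this to require bootstrapping the crude estimate $B=O((T_+-t)^{1/4-\varepsilon})$ into a sharper one, using $ABC=4$ to eliminate one variable and then iterating.

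Finally, under the rescaling $\bar g(t)=(B_0/B(t))g(t)$ the diagonal coefficients become $\bar A=(B_0/B)A$, $\bar B=B_0$, $\bar C=(B_0/B)C$. The asymptotics above give $\bar A\to\infty$, $\bar B=B_0$, and $\bar C\to B_0\eta_2/\eta_1=:C_*$, so the dual tensor $Q=\bar A^{-1}f_1\otimes f_1+\bar B^{-1}f_2\otimes f_2+\bar C^{-1}f_3\otimes f_3$ converges to $Q_*=B_0^{-1}f_2\otimes f_2+C_*^{-1}f_3\otimes f_3$. Because $[f_2,f_3]=2f_1$ in $\mathrm{E}(2)$, the distribution $\mathrm{span}(f_2,f_3)$ is bracket-generating, so $Q_*$ induces a genuine sub-Riemannian distance in the sense described in the introduction.
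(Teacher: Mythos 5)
This statement is Theorem 6.1 of the paper, which is not proved there at all: it is quoted from Cao and Saloff-Coste \cite{CS}, so your proposal is necessarily a different route --- a from-scratch ODE analysis rather than a citation. Your skeleton is the standard and correct one for these homogeneous flows, and the individual computations check out: $\frac{d}{dt}\ln(ABC)=0$ is right, the preservation of $A>B$ (the paper itself later uses the cleaner identity $\frac{d}{dt}(A-B)=\frac{4}{3}(A-B)(A^2+AB+B^2)$, which gives you the uniform lower bound $A-B\ge A_0-B_0$ for free), the Riccati-type bound $A'\ge\frac{4}{3}(A_0-B_0)A^2$ forcing $T_+<\infty$, and the substitution $u=A^{-2}$ with $\dot u\to-\frac{8}{3}$ yielding $A\sim\frac{\sqrt 6}{4}(T_+-t)^{-1/2}$ are all correct. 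One small point you should make explicit: to conclude that blow-up of $A$ is the \emph{only} failure mode, note that while $A$ stays bounded the logarithmic derivatives of $B$ and $C$ are bounded, so neither can reach $0$ in finite time.

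The genuine gap is the one you yourself flag: passing from $\frac{d}{dt}\ln B\sim-\frac{1}{4}(T_+-t)^{-1}$ to the existence of a positive finite limit $\eta_1=\lim B(t)(T_+-t)^{-1/4}$ requires showing that $\frac{d}{dt}\ln\bigl(B(T_+-t)^{-1/4}\bigr)$ is absolutely integrable up to $T_+$, and this is precisely the substantive content of the asymptotic claim --- without it you only get two-sided bounds of the stated order, not the ``$\sim$'' with a constant. The bootstrap you sketch does close: a crude bound $B=O((T_+-t)^{1/4-\varepsilon})$ gives $B/A=O((T_+-t)^{3/4-\varepsilon})$, whence $u(t)=\frac{8}{3}(T_+-t)\bigl(1+O((T_+-t)^{3/4-\varepsilon})\bigr)$ and the error in $\frac{2}{3}A^2-\frac{1}{4}(T_+-t)^{-1}$ is $O((T_+-t)^{-1/4-\varepsilon})$, which is integrable; but as written this step is asserted rather than carried out, so the proposal is an essentially correct proof outline with its hardest lemma left open. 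The final rescaling argument and the identification of the sub-Riemannian limit via $Q_*$ and the bracket $[f_2,f_3]=2f_1$ are fine and match the discussion in the paper's introduction.
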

We will prove the following theorem.
\begin{theorem}
Let $\lam(t)$ be the  first eigenvalue of $-\Del$. Then we get

(1) If $A_0=B_0$, then $g(t)=g_0$, and $\lam(t)$ is  a constant.

(2) If $A_0>B_0$ and $C_0\geq B_0$,  then there is a time $\tau$ such that
$\lam(t) e^{\int_{\tau}^{t}\left(-\frac{2}{3}R+2R_{11}\right)dt}$ is nondecreasing along the backward Ricci flow, and $\lam(t) e^{\int_{\tau}^{t}\left(-\frac{2}{3}R+2R_{33}\right)dt}$ is nonincreasing.
Moreover, we have
$$\lam(\tau)e^{2c_2\left[(T_+-\tau)^{-1/2}-(T_+-t)^{-1/2}\right]}\leq \lam(t)\leq \lam(\tau)\left(\frac{T_+-t}{T_+-\tau}\right)^{c_1}.$$
As $t$ goes to $T_+$, $\lam(t)$ approaches $0$.
\end{theorem}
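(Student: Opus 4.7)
My plan follows the strategy developed for the $\text{SU}(2)$ and $\text{E}(1,1)$ cases: first pin down an ordering of the three Ricci eigenvalues, then use it to sandwich $\ddt\lam$ between two expressions built from (2.2) and (6.1), and finally extract quantitative bounds from the asymptotics in Theorem 6.1(2). Part (1) is a quick check: with $A_0=B_0$, the system (6.2) gives $\ddt A=\ddt B=\ddt C=0$, so $g(t)\equiv g_0$ and $\lam(t)\equiv\lam(0)$.

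For part (2), since $A>B$ throughout the flow, (6.1) gives $R_{11}>0$, $R_{22}<0$ and $R_{33}=-\tfrac12 C(A-B)^2\le 0$, so $R_{11}$ is always the largest. The subtle comparison is between $R_{22}$ and $R_{33}$:
\[ R_{22}-R_{33}=\tfrac12(A-B)\bigl[C(A-B)-B(A+B)\bigr]. \]
Using (6.2) I would compute $\ddt\ln(C/B)=2B(A-B)>0$, so $C/B$ is strictly increasing; combined with the hypothesis $C_0\ge B_0$ this forces $\lim_{t\to T_+}C/B=\eta_2/\eta_1>1$, while the asymptotics of $A,B$ yield $(A+B)/(A-B)\to 1$ as $t\to T_+$. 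Consequently there is a time $\tau$ beyond which $C(A-B)\ge B(A+B)$, i.e.\ $R_{22}\ge R_{33}$, giving the ordering $R_{11}>R_{22}\ge R_{33}$ and the sandwich
\[ \tfrac23 R\lam-2R_{11}\lam\le \ddt\lam\le \tfrac23 R\lam-2R_{33}\lam. \]
The stated monotonicity of $\lam(t)e^{\int_\tau^t(-\frac23 R+2R_{ii})\,dt}$ for $i=1,3$ then follows by dividing by $\lam$ and integrating.

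For the quantitative estimate I would substitute $A\sim\tfrac{\sqrt 6}{4}(T_+-t)^{-1/2}$ and $B,C\sim(T_+-t)^{1/4}$ into (6.1) to obtain, for $t\ge\tau$,
\[ \tfrac23 R-2R_{33}\le -c_1(T_+-t)^{-1},\qquad \tfrac23 R-2R_{11}\ge -c_2(T_+-t)^{-3/2}, \]
in direct parallel with the $\text{SU}(2)$ and $\text{E}(1,1)$ computations (the cubic $C^3$ term in $R_{33}$ is negligible since $C\to 0$, and the $A^3$ in $R_{11}$ dominates). Integrating $-c_2(T_+-t)^{-3/2}\le \tfrac{1}{\lam}\ddt\lam\le -c_1(T_+-t)^{-1}$ from $\tau$ to $t$ yields the two-sided bound in the theorem, and since $c_1>0$ the upper estimate forces $\lam(t)\to 0$ as $t\to T_+$.

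The main technical obstacle is the comparison $R_{22}\ge R_{33}$: both quantities are negative, and their relative size depends on the asymptotic constants $\eta_1,\eta_2$, which are not given explicitly. The key point to justify carefully is that the ODE monotonicity of $C/B$ together with $C_0\ge B_0$ genuinely drives $\eta_2/\eta_1$ strictly above $1$, so that $C/B$ eventually dominates the limit $1$ of $(A+B)/(A-B)$; this is precisely where the hypothesis $C_0\ge B_0$ is used (and where it is essential, as hinted by the remark following the theorem). Once this ordering is secured, the remaining estimates are routine transcriptions of the earlier cases.
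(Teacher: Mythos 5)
Your proposal is correct and follows essentially the same route as the paper: part (1) from the factor $(A-B)$ in (6.2), part (2) by using $\ddt\ln(C/B)=2B(A-B)>0$ together with $C_0\ge B_0$ to force the ordering $R_{11}>R_{22}\ge R_{33}$ after some $\tau$, then sandwiching $\frac{1}{\lam}\ddt\lam$ between $-c_2(T_+-t)^{-3/2}$ and $-c_1(T_+-t)^{-1}$ via the asymptotics of Theorem 6.1(2) and integrating. Your justification of $R_{22}\ge R_{33}$ by comparing $C/B$ with $(A+B)/(A-B)\to 1$ is just a slightly more explicit packaging of the paper's inequality $A(C-B)-BC-B^2>0$ for $t\ge\tau$.
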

\begin{remark}
In case (2) of  the above theorem, if $C_0<B_0$, we can get the similar estimate.
\end{remark}
\begin{proof}
\noindent

(1) If $A_0=B_0$, then $g(t)=g_0$, and $\lam(t)$ is independent of $t$.

(2) If $A_0>B_0$, then by (6.2)  we have
$$\ddt(A-B)=\frac{4}{3}(A-B)(A^2+AB+B^2).$$
So $A-B$ is increasing and $A(t)>B(t)$. This and (6.1) yield
$$R_{11}>0,\,\,R_{22}<0,\,\,R_{33}<0.$$

Assume that $C_0\geq B_0$. From the equations for $B$, $C$ in (6.2), we get
\begin{align*}
\ddt\ln\frac{C}{B}&=\frac{2}{3}[(2B+A)(A-B)-(A-B)^2]\\
&=2B(A-B)
\end{align*}
and conclude that $\frac{C}{B}$ is increasing and $C(t)>B(t)$ for $t>0$.

By (6.1) and (2) in Theorem 6.1, we get
\begin{align*}
R_{22}-R_{33}&=\frac{1}{2}(A-B)(AC-BC-AB-B^2)\\
&=\frac{1}{2}(A-B)[A(C-B)-BC-B^2]\\
&>0
\end{align*}
 if $t\geq \tau$.
So we obtain $R_{11}>R_{22}>R_{33}$
and
$$\frac{2}{3}R\lam -2R_{11}\lam\leq\ddt \lam\leq \frac{2}{3}R\lam -2R_{33}\lam$$ with $t\geq \tau$.

Then
$\lam(t) e^{\int_{\tau}^{t}\left(-\frac{2}{3}R+2R_{11}\right)dt}$ is nondecreasing along the backward Ricci flow, and $\lam(t) e^{\int_{\tau}^{t}\left(-\frac{2}{3}R+2R_{33}\right)dt}$ is nonincreasing.

Further computations show that
\begin{align*}
&\frac{2}{3}R-2R_{33}\\
= &-\frac{1}{3}(A-B)^2+C(A-B)^2\\
=&-\frac{1}{3}A^2-\frac{1}{3}B^2+\frac{2}{3}AB++CA^2+CB^2-2ABC\\
\leq &-c_{1}(T_+-t)^{-1}
\end{align*}
and
\begin{align*}
&\frac{2}{3}R-2R_{11}\\
= &-\frac{1}{3}(A-B)^2-A(A^2-B^2)\\
=&-\frac{1}{3}A^2-\frac{1}{3}B^2+\frac{2}{3}AB-A^3+AB^2\\
\geq &-c_{2}(T_+-t)^{-3/2}
\end{align*}
after a time $\tau$.
Thus we arrive at
$$-c_{2}(T_+-t)^{-3/2}\leq \frac{1}{\lam}\ddt \lam \leq -c_{1}(T_+-t)^{-1}.$$
Integration from $\tau$ to $t$ gives
$$\lam(\tau)e^{2c_2\left[(T_+-\tau)^{-1/2}-(T_+-t)^{-1/2}\right]}\leq \lam(t)\leq \lam(\tau)\left(\frac{T_+-t}{T_+-\tau}\right)^{c_1}.$$
As $t$ goes to $T_+$, $\lam(t)$ approaches $0$.

\end{proof}

\section{$\SL$}
Given a metric $g_0$, we choose a Milnor frame such that
$$\left[f_2,f_3\right]=-2f_1,\,\,\,\left[f_3,f_1\right]=2f_2,\,\,\,\left[f_1,f_2\right]=2f_3.$$
Under the normalization $A_0B_0C_0=4$, the nonzero curvature components are
\begin{equation}
\left\{
\begin{aligned}
&R_{11}=\frac{1}{2}A[A^{2}-(B-C)^{2}],\\
&R_{22}=\frac{1}{2}B[B^{2}-(A+C)^{2}],\\
&R_{33}=\frac{1}{2}C[C^{2}-(A+B)^{2}],\\
&R=\frac{1}{2}[A^{2}-(B-C)^{2}]+\frac{1}{2}[B^{2}-(A+C)^{2}]+\frac{1}{2}[C^{2}-(A+B)^{2}].\end{aligned}\right.
\end{equation}
Then the backward Ricci flow equations are
\begin{equation}
\left\{
\begin{aligned}
&\frac{dA}{dt}=-\frac{2}{3}[-A^2(2A+B+C)+A(B-C)^2],\\
&\frac{dB}{dt}=-\frac{2}{3}[-B^2(2B+A-C)+B(A+C)^2],\\
&\frac{dC}{dt}=-\frac{2}{3}[-C^2(2C+A-B)+C(A+B)^2].
\end{aligned} \right.
\end{equation}
 Under the assumption  $B_0\geq C_0$,  Cao \cite{CS, CGS} proved the following theorem.
\begin{theorem}
The maximal  existence time $T_+$ is finite. Moreover,

(1) If there exists a time $t_0$ such that $A(t_0)\geq B(t_0)$, then
$$A\sim\frac{\sqrt{6}}{4}(T_+-t)^{-1/2},\,B(t)\sim\eta_1(T_+-t)^{1/4},\,C(t)\sim\eta_2(T_+-t)^{1/4}$$ with positive constants $\eta_i$, $i=1,2.$

(2) If there exists a time $t_0$ such that $A(t_0)\leq B(t_0)-C(t_0)$,  then
$$A\sim\eta_1(T_+-t)^{1/4},\,B(t)\sim\frac{\sqrt{6}}{4}(T_+-t)^{-1/2},\,C(t)\sim\eta_2(T_+-t)^{1/4}$$ with positive constants $\eta_i$, $i=1,2.$

(3) If $A<B<A+C$ for all time $t\in [0,T_+)$, we arrive at
$$A\sim\frac{\sqrt{6}}{4}(T_+-t)^{-1/2},\,B(t)\sim\frac{\sqrt{6}}{4}(T_+-t)^{-1/2},\,C(t)\sim\frac{32}{3}(T_+-t).$$

In all cases, the metric $g(t)$ converges to a sub-Riemannian geometry after a proper rescaling.
\end{theorem}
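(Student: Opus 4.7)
The plan is to analyze the ODE system (7.2) on the invariant surface $\{ABC=4\}\subset\mathbb{R}_+^3$, using that the normalized backward Ricci flow preserves volume on a locally homogeneous manifold (since $R$ equals its average). The argument naturally splits into three parts: finite-time blow-up, invariant regions realizing the trichotomy, and self-similar asymptotics.

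For finite-time blow-up, I would first show that the inequality $B(t)\geq C(t)$ is propagated. A direct computation from (7.2) gives
\[
\ddt\ln(B/C)=2(B-C)(A+B+C),
\]
which is manifestly nonnegative on $\{B\geq C\}$. Once $B\geq C$ is maintained, whichever of $A$ or $B$ dominates picks up a $\frac{4}{3}X^3$ contribution in its own equation, and comparison with the Riccati ODE $\dot x=\frac{4}{3}x^3$ forces $T_+<\infty$.

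Next I would establish the trichotomy by exhibiting invariant regions. Case (1) corresponds to $\{A\geq B\}$; forward invariance follows from computing $\ddt(A-B)$ along (7.2) and checking its sign on the boundary $A=B$. Case (2) corresponds to $\{A\leq B-C\}$ and is handled analogously by examining $\ddt(B-C-A)$ on $A=B-C$. Case (3) is the complementary set $\{B-C<A<B\}$ maintained for all $t\in[0,T_+)$, which is a codimension-one invariant configuration separating the basins of (1) and (2). Self-similar asymptotics then follow by the ansatz $A=\alpha(T_+-t)^{-1/2}$: in case (1), the leading balance in the $A$-equation is $\dot A\approx\frac{4}{3}A^3$, forcing $\alpha=\sqrt{6}/4$; substituting back, the $B$-equation reduces to $\ddt\ln B\approx -\frac{2}{3}A^2\approx -\frac{1}{4(T_+-t)}$, giving $B\sim\eta_1(T_+-t)^{1/4}$ and similarly $C\sim\eta_2(T_+-t)^{1/4}$, with the volume identity $ABC=4$ pinning down $\eta_1\eta_2$. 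Case (2) is symmetric under the relabeling induced by the sign change $[f_2,f_3]=-2f_1$, which distinguishes the role of $A$ from that of $B$ in (7.1)--(7.2). Sub-Riemannian convergence in each case follows by passing to the rescaled dual tensor $Q=A^{-1}f_1\otimes f_1+B^{-1}f_2\otimes f_2+C^{-1}f_3\otimes f_3$: the inverse component in the blow-up direction vanishes, leaving a degenerate inner product on a two-dimensional distribution that remains bracket-generating thanks to the nontrivial Milnor relations.

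The main obstacle is case (3). Verifying that the set $\{B-C<A<B\}$ can be maintained throughout $[0,T_+)$ is more delicate than the open cases, since a priori any small perturbation could tip the trajectory into (1) or (2). I would introduce the rescaled variables $a=A(T_+-t)^{1/2}$, $b=B(T_+-t)^{1/2}$ together with the rescaled time $s=-\log(T_+-t)$, reducing to an autonomous planar system whose fixed point $(a,b)=(\sqrt{6}/4,\sqrt{6}/4)$ is approached along its stable manifold; case (3) corresponds to exactly this codimension-one trajectory, and the linear decay $C\sim\frac{32}{3}(T_+-t)$ is then forced by $ABC=4$. The existence of such a trajectory would be established by a shooting argument between the two open basins, combined with continuous dependence on initial data.
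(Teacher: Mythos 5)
First, a structural point: the paper does not prove this statement. Theorem 7.1 is quoted verbatim from Cao--Saloff-Coste \cite{CS} and Cao--Guckenheimer--Saloff-Coste \cite{CGS} (``Cao proved the following theorem''), so there is no in-paper proof to compare against; your proposal has to be judged against the argument in those references, whose overall strategy (invariant regions for the ODE system on $\{ABC=4\}$, comparison ODEs for the blow-up rate, and a rescaled autonomous system for the degenerate case) your outline does follow. Your computed identity $\ddt\ln(B/C)=2(B-C)(A+B+C)$ is correct, and the analogous identity $\ddt\ln(A/B)=2(A+B)(A-B+C)$ (which you should also record, since the sign of $A-B+C$ is what actually governs the trichotomy and shows $\{A\geq B\}$ and $\{A\leq B-C\}$ are forward invariant) checks out against (7.2). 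The leading-order balances and the value $\alpha=\sqrt{6}/4$ are consistent, and $C\sim\tfrac{32}{3}(T_+-t)$ in case (3) does follow from $ABC=4$ once the asymptotics of $A$ and $B$ are known with that constant.

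The genuine gaps are in passing from leading-order consistency to the asserted asymptotics. (a) Writing $\dot A\approx\tfrac{4}{3}A^3$ gives only the one-sided comparison $\dot A\geq\tfrac{4}{3}A^3$ on $\{A\geq B\geq C\}$; the matching upper bound, and hence $A\sim\tfrac{\sqrt6}{4}(T_+-t)^{-1/2}$ with that exact constant, requires first proving $B,C\to 0$ and $B+C=o(A)$, i.e.\ a bootstrap, not an ansatz. (b) The statements $B\sim\eta_1(T_+-t)^{1/4}$ with $\eta_1\in(0,\infty)$ require showing that $\ddt\ln B+\tfrac14(T_+-t)^{-1}$ is \emph{integrable} up to $T_+$, which needs a second-order expansion of $A^2-\tfrac{3}{8}(T_+-t)^{-1}$; mere leading-order cancellation does not rule out $\eta_1\in\{0,\infty\}$. (c) In case (3) the theorem is conditional (``if $A<B<A+C$ for all $t$''), so a shooting argument establishing that such trajectories \emph{exist} is beside the point; what is needed is that \emph{any} trajectory remaining in that strip has the stated asymptotics, which requires analyzing the linearization at the fixed point of your rescaled planar system and excluding other $\omega$-limit behavior inside the strip. (d) Finally, the unconditional claim $T_+<\infty$ must also be verified in case (3), where neither $A$ nor $B$ dominates the other; your ``whichever of $A$ or $B$ dominates'' argument does not directly cover it. These are fillable but nontrivial omissions, and they constitute most of the actual work in \cite{CS,CGS}.
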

We have the following theorem.
\begin{theorem}
Let $\lam(t)$ be the  first eigenvalue of $-\Del$. Then we get

(1) If there is a time $t_0$ such that $A(t_0)\geq B(t_0)$, then  there exists a  time $\tau$ such that
$\lam(t) e^{\int_{\tau}^{t}\left(-\frac{2}{3}R+2R_{11}\right)dt}$ is nondecreasing  along the backward Ricci flow, and $\lam(t) e^{\int_{\tau}^{t}\left(-\frac{2}{3}R+2R_{22}\right)dt}$ is nonincreasing.
Moreover, we have
$$\lam(\tau)e^{2c_2\left[(T_+-\tau)^{-1/2}-(T_+-t)^{-1/2}\right]}\leq \lam(t)\leq \lam(\tau)\left(\frac{T_+-t}{T_+-\tau}\right)^{c_1}.$$
As $t$ goes to $T_+$, $\lam(t)$ approaches $0$.

(2)  If there exist a time $t_0$ such that $A(t_0)\leq B(t_0)-C(t_0)$, and a time $t_1$ such that $A(t_1)>C(t_1)$, then there is a time $\tau$ such that
$\lam(t) e^{\int_{\tau}^{t}\left(-\frac{2}{3}R+2R_{22}\right)dt}$ is nondecreasing  along the backward  Ricci flow, and $\lam(t) e^{\int_{\tau}^{t}\left(-\frac{2}{3}R+2R_{11}\right)dt}$ is nonincreasing.
Moreover, we have
$$\lam(\tau)e^{2c_2\left[(T_+-\tau)^{-1/2}-(T_+-t)^{-1/2}\right]}\leq \lam(t)\leq \lam(\tau)\left(\frac{T_+-t}{T_+-\tau}\right)^{c_1},$$
and $\lam(t)$ approaches $0$ as $t$ goes to $T_+$.

(3) If $ A<B < A + C$ for all time $t \in [0, T_+)$,  then there is time $\tau$ such that
$\lam(t) e^{\int_{\tau}^{t}\left(-\frac{2}{3}R+2R_{11}\right)dt}$ is nondecreasing along the backward Ricci flow, and $\lam(t) e^{\int_{\tau}^{t}\left(-\frac{2}{3}R+2R_{33}\right)dt}$ is nonincreasing.
Moreover, we have
$$\lam(\tau)\left(\frac{T_+-t}{T_+-\tau}\right)^{c_2}\leq \lam(t)\leq \lam(\tau)\left(\frac{T_+-t}{T_+-\tau}\right)^{c_1}.$$
Obviously,  $\lam(t)$ approaches $0$, as $t$ goes to $T_+$.

\end{theorem}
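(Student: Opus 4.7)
The approach I would take mirrors the strategy used in the earlier Bianchi classes. Start from the evolution equation in Theorem 2.1. Since the manifold is locally homogeneous and the Milnor frame diagonalizes $\Ric$, the pointwise inequality
$$R_{\min}|\na u|^2 \leq R_{ij}\na_i u\na_j u \leq R_{\max}|\na u|^2$$
holds (with $R_{\min}=\min_i R_{ii}$, $R_{\max}=\max_i R_{ii}$), so integrating and using $\int|\na u|^2\,d\mu=\lam$ yields the two-sided ODE bound
$$\tfrac{2}{3}R\lam - 2R_{\max}\lam \;\leq\; \ddt \lam \;\leq\; \tfrac{2}{3}R\lam - 2R_{\min}\lam.$$
The monotonicity of the two exponential-weighted quantities claimed in the theorem is then equivalent to identifying which diagonal entry plays the role of $R_{\max}$ and which plays the role of $R_{\min}$ for $t$ in a late time window $[\tau,T_+)$.

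Next, in each of the three cases I would exploit the asymptotics of Theorem 7.1 to pin down the ordering of the $R_{ii}$. The useful algebraic identities are the factorizations of differences coming from (7.1), for instance
$$R_{22}-R_{33} = \tfrac{1}{2}(B-C)\bigl[(B+C)^2 - A^2\bigr], \qquad R_{11}-R_{22} = \tfrac{1}{2}\bigl[(A-B)(A+B-C)(A+B+C)+4ABC\bigr],$$
and the analogous identity for $R_{11}-R_{33}$. Plugging in the rates from Theorem 7.1 settles the signs. Concretely: in case (1) ($A$ dominant) one gets $R_{22}\leq R_{33}\leq R_{11}$, since $A$ is asymptotically much larger than $B+C$; in case (2) ($B$ dominant, with the hypothesis $A>C$ ensuring that the $(A-C)(A+C-B)(A+B+C)$ contribution to $R_{11}-R_{33}$ overwhelms the $4ABC$ correction) one obtains $R_{11}\leq R_{33}\leq R_{22}$; in case (3) the constraint $A<B<A+C$ forces $B-A<C$, which is precisely what makes $4ABC$ dominate the negative $(A-B)(A+B-C)(A+B+C)$ piece, yielding $R_{11}>R_{22}$, while $C\to 0$ with $A^2\to\infty$ makes $R_{33}$ the smallest.

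Once the ordering is fixed, I would compute the asymptotic size of $\frac{2}{3}R-2R_{\max}$ and $\frac{2}{3}R-2R_{\min}$ using
$$R = -\tfrac{1}{2}\bigl[(A+B+C)^2-4BC\bigr] + \tfrac{1}{2}\bigl[\text{cross terms specific to }\SL\bigr],$$
so that after cancellation with the dominant Ricci entry one is left with expressions whose leading behavior is read off from Theorem 7.1. In cases (1) and (2), the largest $R_{ii}$ scales like $(T_+-t)^{-3/2}$ while the smallest and the scalar curvature scale like $(T_+-t)^{-1}$, so the inequality reduces to
$$-c_2(T_+-t)^{-3/2}\;\leq\; \tfrac{1}{\lam}\ddt\lam\;\leq\; -c_1(T_+-t)^{-1},$$
exactly as in Sections 5 and 6; integrating from $\tau$ to $t$ gives the stated bound. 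In case (3), both $R$ and every $R_{ii}$ are of order $(T_+-t)^{-1}$, so both sides are $O((T_+-t)^{-1})$ and the integrated inequality is polynomial, producing the power-law sandwich. The convergence $\lam(t)\to 0$ as $t\to T_+$ follows from the upper bound in each case.

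The main obstacle will be case (3), where the three Ricci eigenvalues are of comparable size and the ordering $R_{33}<R_{22}<R_{11}$ depends delicately on the strict inequalities $A<B<A+C$. Verifying that $4ABC$ beats the negative factored product in $R_{11}-R_{22}$ (and, independently, checking $R_{22}>R_{33}$) requires combining the structural constraint with the asymptotic relations $A\sim B\sim (T_+-t)^{-1/2}$ and $C\sim (T_+-t)$ rather than just one or the other. Once that sign analysis is in hand, the remaining steps are parallel to those carried out in the preceding sections.
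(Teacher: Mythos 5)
Your proposal is correct and follows essentially the same route as the paper: sandwich $\frac{1}{\lambda}\frac{d\lambda}{dt}$ between $\frac{2}{3}R-2R_{\max}$ and $\frac{2}{3}R-2R_{\min}$ via Theorem 2.1, pin down the ordering of the $R_{ii}$ in each case from factorization identities together with the asymptotics of Theorem 7.1 (your orderings agree with the paper's in all three cases), and integrate the resulting $(T_+-t)^{-3/2}$ and $(T_+-t)^{-1}$ bounds. The only quibbles are cosmetic and do not affect the outcome: in cases (1)--(2) the smallest $R_{ii}$ is actually of order $(T_+-t)^{-3/4}$, and in case (3) the $R_{ii}$ stay bounded while it is $\frac{2}{3}R\sim-(T_+-t)^{-1}$ that drives both sides; also, in case (3) the paper obtains $R_{11}>R_{22}$ for free from $R_{11}>0>R_{22}$ and gets $R_{22}\geq R_{33}$ from the exact identity $R_{22}-R_{33}=\tfrac12(B-C)\left[(B+C)^2-A^2\right]\geq 0$, rather than through your (valid) $4ABC$ versus $(A-B)(A+B-C)(A+B+C)$ comparison.
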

\begin{remark}
If $A(t)\leq C(t)$ for all $t$ in case (2) of the above theorem,  we can get the similar estimate.
\end{remark}

\begin{proof}
(1) If there is a time $t_0$ such that $A(t_0)>B_{0}$, then by (7.1)  and (1) in Theorem 7.1,  we have
$$R_{11}>0,\,\,R_{22}<0,\,\,R_{33}<0$$ after a time $\tau$.

Next we compare $R_{22}$ with $R_{33}$:

\begin{align*}
R_{22}-R_{33}&=\frac{1}{2}B[B^{2}-(A+C)^{2}]-\frac{1}{2}C[C^{2}-(A+B)^{2}]\\
&=\frac{1}{2}(B-C)[(B+C)^{2}-A^{2}].
\end{align*}

The evolution equation of $B-C$ is
$$\ddt(B-C)=\frac{2}{3}[2(B^3-C^3)+A(B^2-C^2)-A^2(B-C)],$$ from which it follows that $B\geq C$ for all $t$.

Thus  we obtain $$R_{11}>R_{33}\geq R_{22}$$ and
$$\frac{2}{3}R\lam -2R_{11}\lam\leq\ddt \lam\leq \frac{2}{3}R\lam -2R_{22}\lam$$ with $t\geq \tau$.

Then
$\lam(t) e^{\int_{\tau}^{t}\left(-\frac{2}{3}R+2R_{11}\right)dt}$ is nondecreasing  along the backward  Ricci flow, and $\lam(t) e^{\int_{\tau}^{t}\left(-\frac{2}{3}R+2R_{22}\right)dt}$ is nonincreasing.

Moreover, we get\begin{align*}
&\frac{2}{3}R-2R_{22}\\
= &\frac{1}{3}(2BC-2AB-2AC-A^2-B^2-C^2)-B[B^{2}-(A+C)^{2}]\\
\leq & -c_1(T_+-t)^{-1}
\end{align*}
and
\begin{align*}
&\frac{2}{3}R-2R_{11}\\
= &\frac{1}{3}(2BC-2AB-2AC-A^2-B^2-C^2)-A[A^{2}-(B-C)^{2}]\\
\geq & -c_2(T_+-t)^{-3/2}
\end{align*}
after a time $\tau$.

Integrating from $\tau$ to $t$ gives
$$\lam(\tau)e^{2c_2\left[(T_+-\tau)^{-1/2}-(T_+-t)^{-1/2}\right]}\leq \lam(t)\leq \lam(\tau)\left(\frac{T_+-t}{T_+-\tau}\right)^{c_1}.$$
As $t$ goes to $T_+$, $\lam(t)$ approaches $0$.

(2) If there exists a time $t_0$ such that $A(t_0)\leq B(t_0)-C(t_0)$ , the second case in Theorem 7.1 implies
$$R_{11}<0,\,\,R_{22}>0,\,\,R_{33}<0$$ after a time $\tau$.

It follows from  equations (7.2) that
$$\ddt \ln(A/C)=2(A+B-C)(A+C),$$ which implies that $\ddt \ln(A/C)$ is increasing since $B\geq C$ is preserved.

Assume that there is a time $t_1$ such that $A(t_1)> C(t_1)$. Then $A(t)>C(t)$ after $t_1$.

The behaviors of $A$, $B$ and $C$ yield that
\begin{align*}
R_{11}-R_{33}&=\frac{1}{2}A(A^{2}-(B-C)^{2}]-\frac{1}{2}C[C^{2}-(A+B)^{2}]\\
&=\frac{1}{2}(C-A)B^2+\frac{1}{2}(A^3-C^3+CA^2-AC^2+4ABC)\\
&< 0
\end{align*}
after a time $\tau$.

Thus we arrive at
 $$R_{22}>R_{33}>R_{11}$$ and
$$\frac{2}{3}R\lam -2R_{22}\lam<\ddt \lam< \frac{2}{3}R\lam -2R_{11}\lam$$ with $t\geq \tau$.

Then
$\lam(t) e^{\int_{\tau}^{t}\left(-\frac{2}{3}R+2R_{22}\right)dt}$ is nondecreasing along the backward  Ricci flow, and $\lam(t) e^{\int_{\tau}^{t}\left(-\frac{2}{3}R+2R_{11}\right)dt}$ is nonincreasing.

Now we estimate
\begin{align*}
&\frac{2}{3}R-2R_{22}\\
= &\frac{1}{3}(2BC-2AB-2AC-A^2-B^2-C^2)-B[B^{2}-(A+C)^{2}]\\
\geq &-c_2(T_+-t)^{-3/2}
\end{align*}
and
\begin{align*}
&\frac{2}{3}R-2R_{11}\\
= &\frac{1}{3}(2BC-2AB-2AC-A^2-B^2-C^2)-A[A^{2}-(B-C)^{2}]\\
\leq &-c_1(T_+-t)^{-1}
\end{align*}
if $t\geq \tau$.

The above estimates imply
$$\lam(\tau)e^{2c_2\left[(T_+-\tau)^{-1/2}-(T_+-t)^{-1/2}\right]}\leq \lam(t)\leq \lam(\tau)\left(\frac{T_+-t}{T_+-\tau}\right)^{c_1},$$
and $\lam(t)$ approaches $0$, as $t$ goes to $T_+$.

(3) If $A<B<A+C$ for all time $t\in [0,T_+)$, it follows from (7.1) and Theorem 7.1 that
$$R_{11}>0,\,\,R_{22}<0,\,\,R_{33}<0$$ after a time $\tau$.

Since $A<B<A+C$, we obtain
 \begin{align*}
R_{22}-R_{33}&=\frac{1}{2}(B-C)[(B+C)^{2}-A^{2}]\geq 0.
\end{align*}
Thus  we have $$R_{11}>R_{22}\geq R_{33}$$ and
$$\frac{2}{3}R\lam -2R_{11}\lam<\ddt \lam\leq \frac{2}{3}R\lam -2R_{33}\lam$$ with $t\geq \tau$.

Thus
$\lam(t) e^{\int_{\tau}^{t}\left(-\frac{2}{3}R+2R_{11}\right)dt}$ is nondecreasing along the backward Ricci flow, and $\lam(t) e^{\int_{\tau}^{t}\left(-\frac{2}{3}R+2R_{33}\right)dt}$ is nonincreasing.

Moreover, we compute
\begin{align*}
&\frac{2}{3}R-2R_{33}\\
= &\frac{1}{3}(2BC-2AB-2AC-A^2-B^2-C^2)-C[C^{2}-(A+B)^{2}]\\
\leq &-c_1(T_+-t)^{-1}
\end{align*}

and
\begin{align*}
&\frac{2}{3}R-2R_{11}\\
= &\frac{1}{3}(2BC-2AB-2AC-A^2-B^2-C^2)-A[A^{2}-(B-C)^{2}]\\
= &\frac{1}{3}(2BC-2AB-2AC-A^2-B^2-C^2)+A[A+(B-C)][B-A-C]\\
\geq & -c_2(T_+-t)^{-1}
\end{align*}
after a time  $\tau$ since in this case the fact $\lim_{T_+}C=0$ yields $\lim_{T_+}(B-A)=0$.

After integrating  from $\tau$ to $T_+$, we get
$$\lam(\tau)\left(\frac{T_+-t}{T_+-\tau}\right)^{c_2}\leq \lam(t)\leq \lam(\tau)\left(\frac{T_+-t}{T_+-\tau}\right)^{c_1}.$$
Obviously,  $\lam(t)$ approaches $0$, as $t$ goes to $T_+$.
\end{proof}

\vskip 30 pt

{\bf Acknowledgement}

 The author would like to thank Professor Xiaodong Cao and Professor Laurent Saloff-Coste for their suggestions and interests in this work.   The author would also like to thank referees for their valuable comments. The author was supported by NSFC (11001268) and Chinese Universities Scientific Fund (2014QJ002).

\vskip 30 pt

 \end{document}